\newtheorem*{mainthm}{Main Theorem}
\newtheorem{theorem}{Theorem}[section]
\newtheorem{lemma}[theorem]{Lemma}
\newtheorem{corollary}[theorem]{Corollary}
\newtheorem{proposition}[theorem]{Proposition}
\newtheorem{claim}[theorem]{Claim}
\theoremstyle{definition}
\newtheorem{remark}[theorem]{Remark}
\newtheorem{assumption}[theorem]{Assumption}
\newcommand{\F}{\mathbb{F}}
\newcommand{\s}{\mathbb{S}}
\newcommand{\V}{\mathbb{V}}
\newcommand{\T}{\mathbb{T}}
\newcommand{\U}{\mathbb{U}}
\newcommand{\Z}{\mathbb{Z}}
\newcommand{\Hom}{\textrm{Hom}}
\newcommand{\M}{\mathbb{M}}
\newcommand{\m}{\mathfrak{m}}
\numberwithin{equation}{section}
\begin{document}

\title[Subgroups of $GL_n$ over complete local Noetherian rings]{A structure theorem for subgroups of $GL_n$ over complete local Noetherian rings with large residual image}
\author{Jayanta Manoharmayum}
\address{School of Mathematics and Statistics, University of
    Sheffield,
    Sheffield S3 7RH,
    U.K.}
\email{J.Manoharmayum@sheffield.ac.uk}
\date{}
\subjclass[2000]{20E18, 20E34, 11E57}

\begin{abstract}

Given a complete local Noetherian ring $(A,\m_A)$  with
 finite residue field and a subfield $\pmb{k}$ of $A/\m_A$, we show that every closed
 subgroup  $G$ of $GL_n(A)$ such that $G\mod{\m_A}\supseteq SL_n(\pmb{k})$  contains a conjugate of $SL_n(W(\pmb{k})_A)$ under some small restrictions on $\pmb{k}$. Here $W(\pmb{k})_A$ is the
 closed subring of $A$ generated by the Teichm\"{u}ller lifts of elements of the subfield $\pmb{k}$.

\end{abstract}

\maketitle

\section{Introduction}

Let $\pmb{k}$ be a finite field of characteristic $p$ and let $W(\pmb{k})$ be its Witt ring.
Then, by the structure theorem for complete local rings (see Theorem 29.2 of
\cite{matsumura}), every complete local ring
with residue field containing $\pmb{k}$ is naturally a $W$-algebra. More precisely,
given a complete local ring $(A,\m_A)$ with maximal ideal $\m_A$ and a field homomorphism $\overline\phi:\pmb{k}\to A/\m_A$, there is a unique homomorphism
$\phi:W(\pmb{k})\to A$ of
local rings which induces $\overline\phi$ on residue fields. The homomorphism $\phi$ is
completely determined by its action on Teichm\"{u}ller lifts:
if $x\in \pmb{k}$ and $\hat{x}\in W(\pmb{k})$ is its Teichm\"{u}ller then $\phi(\hat{x})$ is the
Teichm\"{u}ller lift of $\overline\phi(x)$.

In this article, we consider an
`\emph{analogous}' property for subgroups of $GL_n$ over complete local Noetherian rings. From here on the index $n$ is fixed and assumed to be at least $2$. First a small bit of notation before we state our result formally: Given  a complete local ring  $(A,\m_A)$ and a finite subfield $\pmb{k}$  of
the residue field $A/\m_A$, denote by $W(\pmb{k})_A$ the image of the natural local homomorphism $W(\pmb{k})\to A$ from
the structure theorem. Alternatively, $W(\pmb{k})_A$ is the smallest closed subring of $A$ containing the Teichm\"{u}ller lifts of $\pmb{k}$.

\begin{mainthm} Let $(A,\m_A)$ be a complete local Noetherian ring with maximal ideal
$\m_A$ and finite residue field $A/\m_A$ of characteristic $p$. Suppose we are given a subfield $\pmb{k}$ of $A/\m_A$ and a closed
 subgroup  $G$ of $GL_n(A)$. Assume that:
 \begin{itemize}
 \item The cardinality of $\pmb{k}$ is at least $4$. Furthermore, assume that
 $\pmb{k}\neq\F_5$ if $n=2$ and that $\pmb{k}\neq \F_4$ if $n=3$.
  \item $G\mod{\m_A}\supseteq SL_n(\pmb{k})$.
      \end{itemize}
      Then $G$ contains a conjugate of $SL_n(W(\pmb{k})_A)$.
\end{mainthm}

For an application, set $W_m:=W(\pmb{k})/p^m$ and $G:=SL_n(W_m)$ with
$\pmb{k}$  as in the above theorem. Then the above result implies that  $W_m$, with the natural representation $\rho:G\to SL_n(W_m)$, is the universal deformation ring for deformations of
$\overline\rho:=\rho\mod{p}: G\to SL_n(\pmb{k})$
in the category of complete local Noetherian rings with residue field $\pmb{k}$.
(See Remark \ref{application}.)

We now outline the structure of this article (and introduce some notation along the way). If $M$ is
a module over a commutative ring $A$, then $\M(M)$, resp. $\M_0(M)$, denotes the $GL_n(A)$-module
of $n$ by $n$ matrices over $M$, resp.  $n$ by $n$ trace $0$ matrices over $M$,
with $GL_n(A)$ action given by conjugation. The bi-module structure on $M$ is of course given by $amb:=abm$
for all $a,b\in A$, $m\in M$. A typical application of this consideration is when $B=A/J$ for some
ideal $J$ with $J^2=0$. Then $GL_n(B)$ acts on $\M(J)$ and $\M_0(J)$, and this action is compatible
with the action of $GL_n(A)$.

Given $A$, $B$ and $J$ as above, we can understand subgroups of $SL_n(A)$ if we know
enough  about extensions of $SL_n(B)$ by $\M_0(J)$. We give a brief description of the process involved (in terms of group extensions) in section 2. Determining extensions in general can be a complicated problem
but, for the proof of the main theorem, we only need to look at
extensions of $SL_n(W(\pmb{k})/p^m)$ by $ \M_0(\pmb{k})$. To carry out the argument we need some
control over $H^1(SL_n(W(\pmb{k})/p^m), \M_0(\pmb{k}))$ and $H^2(SL_n(W(\pmb{k})/p^m), \M_0(\pmb{k}))$.  Some care is needed when  $p$ divides $n$; the necessary calculations are carried out in section 3.

We remark that the condition on the residual image of $G$ is necessary for the
calculations used here to work. There are results  due to Pink (see \cite{pink}) characterising closed  subgroups of $SL_2(A)$ when the complete local ring $A$ has
odd residue characteristic. (The proof depends on  matrix/Lie algebra identities that only work when $n=2$.) For explicit descriptions of some classes of subgroups of $SL_2(A)$, see B\"{o}ckle \cite{boeckle}.

A different aspect of the size of closed subgroups of $GL_n(A)$ with large residual image is studied by Boston in \cite{mazur wiles}.  In a sense our result complements that of Boston: we give a lower bound for the
size of closed subgroups assuming the image modulo $\m_A$ is big enough, while Boston's result there, \emph{loc. cit}, says such subgroups will contain $SL_n(A)$ if the image modulo $\m_A^2$ is big enough.

\section{Twisted semi-direct products}
Let  $G$ be a finite group. Given an $\F_p[G]$-module $V$ and a normalised $2$-cocyle $x:G\times G\to V$,  we can then form the \emph{twisted semi-direct product} $V\rtimes_x G$. Here, normalised means that $x(g,e)=x(e,g)=0$ for all $g\in G$ where we have denoted the identity of $G$ by $e$. Recall  $V\rtimes_xG$ has elements $(v,g)$ with $v\in V$, $g\in G$ and composition
\[
(v_1,g_1)(v_2,g_2):= (x(g_1,g_2)+v_1+g_1v_2, g_1g_2),
\] and that the cohomology class of $x$ in $H^2(G,V)$ represents the extension
\begin{equation}\label{twisted semi-direct}
0\xrightarrow{}V\xrightarrow{v\to (v,e)}V\rtimes_xG \xrightarrow{(v,g)\to g}G\xrightarrow{}e.
\end{equation}
The conjugation action of  $V\rtimes_x G$ on  $V$ is the one given by the $G$ action on $V$ i.e.  $(u,g)v:=(u,g)(v,e)(u,g)^{-1}=(gv,e)$ holds for all $u, v\in V$, $g\in G$.

We record the following result for use in the next section.
\begin{proposition}\label{image of transgression}
With $G$, $V$ and $x:G\times G\to V$ as above, let $\phi:V\rtimes_xe\to V$ be the map $(v,e)\to -v$. Then under the transgression map
\[ \delta :\Hom_G(V\rtimes_x e ,V)= H^1(V\rtimes_xe , V)^G\to H^2(G,V),\]
$\delta(\phi)$ is the class of $x$.
\end{proposition}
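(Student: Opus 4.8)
The plan is to compute the transgression $\delta$ explicitly at the cochain level by means of a set-theoretic section, the point of choosing $\phi$ to be $-\mathrm{id}$ rather than $\mathrm{id}$ (after an obvious identification) being that this cancels the sign that transgression introduces. First I would record why the domain of $\delta$ is as stated: the group $V\rtimes_x e$ is abelian and its conjugation action on the normal subgroup $V$ of $V\rtimes_x G$ is trivial, so $H^1(V\rtimes_x e,V)=\Hom(V\rtimes_x e,V)$, and passing to $G$-invariants gives $\Hom_G(V\rtimes_x e,V)$; the map $\phi$ is visibly $G$-equivariant and so defines a class there. Then I would invoke the standard cochain formula for the transgression of an extension $1\to N\to E\to G\to 1$ with $N$ abelian and coefficients in a $G$-module $M$ (viewed as an $E$-module through $E\to G$): choose a normalised section $s\colon G\to E$, with associated factor set $c\colon G\times G\to N$ determined by $s(g_1)s(g_2)=c(g_1,g_2)\,s(g_1g_2)$, extend a $G$-homomorphism $f\colon N\to M$ to the $1$-cochain $\tilde f(n\,s(g)):=f(n)$ on $E$, and observe that $d\tilde f$ vanishes on $N\times N$ and is the inflation to $E$ of a $2$-cocycle on $G$; that $2$-cocycle, namely $(g_1,g_2)\mapsto(d\tilde f)(s(g_1),s(g_2))=-f(c(g_1,g_2))$, represents $\delta(f)$.

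Applying this to $N=V\rtimes_x e$, $E=V\rtimes_x G$, $M=V$ with the evident section $s(g):=(0,g)$ is then immediate. The composition law gives $s(g_1)s(g_2)=(x(g_1,g_2),g_1g_2)=(x(g_1,g_2),e)\cdot(0,g_1g_2)$, so $c(g_1,g_2)=(x(g_1,g_2),e)$, which is just $x$ once we identify $V\rtimes_x e$ with $V$ via $(v,e)\mapsto v$; under the same identification $\phi$ is $v\mapsto -v$, so the recipe outputs the $2$-cocycle $(g_1,g_2)\mapsto-\phi(x(g_1,g_2))=x(g_1,g_2)$, whence $\delta(\phi)=[x]$.

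The step needing the most care is justifying the cochain formula for $\delta$ above, since sign conventions for the Lyndon--Hochschild--Serre differentials are not uniform in the literature. Concretely I would check that $(d\tilde f)(n_1,n_2)=0$ for $n_1,n_2\in N$ (this is additivity of $f$, i.e.\ the $1$-cocycle condition for the trivial action of $N$), that $(d\tilde f)(na,b)=(d\tilde f)(a,nb)=(d\tilde f)(a,b)$ for $n\in N$ and $a,b\in E$ (using that $N$ acts trivially on $M$ and that $f$ is $G$-equivariant), so that $d\tilde f$ really is the inflation of a well-defined $2$-cochain $\bar c$ on $G$ --- which is then automatically a $2$-cocycle --- and finally appeal to the definition of the transgression to get $\delta(f)=[\bar c]$, fixing once and for all the normalisation in which no spurious sign appears. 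Everything else is substitution, so with the formula in hand the proposition drops out; alternatively one may simply cite the classical fact that the transgression of the identity $N\to N$ is (minus) the class of the extension.
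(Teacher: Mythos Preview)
Your proof is correct and follows essentially the same approach as the paper's. Both extend $\phi$ to the $1$-cochain $\pi=\tilde f\colon V\rtimes_x G\to V$, $(v,g)\mapsto -v$, take the section $s(g)=(0,g)$, and compute the coboundary at $(s(g_1),s(g_2))$ to obtain $x(g_1,g_2)$; the only difference is that the paper cites \cite[Proposition~1.6.5]{NSW} for the cochain description of transgression, whereas you verify it by hand.
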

\begin{proof} Let $\pi: V\rtimes_xG\to V$ be the map given by $\pi(v,g):=-v$.  Thus $\pi|_{V\rtimes_xe}=\phi$ and
$\pi(ab)=\pi(a)+a\pi(b)a^{-1}$ whenever $a$ or $b$ is in $V\rtimes_x e$.  The map
$\partial\pi :G\times G\to V$ given by
 $\partial \pi (g_1,g_2):= \pi(a_1)+a_1\pi(a_2)a_1^{-1}-\pi(a_1a_2)$ where $a_i\in V\rtimes_x G$ lifts $g_i$ is then well defined  and $\delta(\phi)$ is the class of $\partial \pi$.
(See Proposition 1.6.5
in  \cite{NSW}.) Taking  $a_i:=(0,g_i)$ we see that $\partial \pi (g_1,g_2)=x(g_1,g_2)$.
\end{proof}

 For the remainder of this section, we assume that we are given  an
  $\F_p[G]$-module $M$ of finite cardinality and an  $\F_p[G]$-submodule $N\subseteq M$ such that the map
 \begin{equation}\label{injective}
H^2(G,N)\to H^2(G,M)\quad\text{  is injective,}
 \end{equation}  and fix a
  normalised $2$-cocycle  $x: G\times G\to N$.  As we shall see,  assumption \ref{injective} pretty much determines  $N\rtimes_x G$ as a subgroup of $M\rtimes_xG$  up to conjugacy.

Suppose we are given a subgroup $H$  of $M\rtimes_x G$ extending $G$ by $N$ i.e. the sequence
\begin{equation}\label{exact2}\begin{CD}
0@>>>  N  @>>> H@>{(m,g)\rightarrow g}>> G @>>> e\end{CD}
 \end{equation}
 is exact.
 By  assumption \ref{injective}, the extension  \ref{exact2}  must
 correspond to $x$ in $H^2(G, N)$.  Hence
 there is an isomomorphism $\theta:N\rtimes_x G\to H$ such that the diagram
\begin{equation}\label{defn theta}\begin{CD}
0  @>>> N @>>> N\rtimes_x G  @>>> G @>>>e\\
@.      @|   @VV{\theta}V       @|     @. \\
0  @>>> N @>>> H @>>> G @>>> e
\end{CD}\end{equation}
commutes, and this allows us to
 define a map $\xi :G\to M$ so that the relation $\theta (0,g)=(\xi(g),g)$ holds for all $g\in G$.

 \begin{proposition}\label{coh prop}
 With notation and assumptions as above, we have:
 \begin{enumerate}[(i)]
 \item $\theta(n,g)=(n+\xi(g), g)$ for all $n\in N$, $g\in G$.
 \item  The map $\xi: G\to M$ is a $1$-cocycle.
 \item If $H^1(G,M)=0$ then $\theta$ is  conjugation by $(m,e)$ for some $m\in M$.
 \end{enumerate}
 \end{proposition}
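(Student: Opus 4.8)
The plan is to push everything through the explicit multiplication law of the twisted semi-direct product, using only that $\theta$ is a group homomorphism making \ref{defn theta} commute and that $x$ is normalised. For (i), I would first observe that $(n,g)=(n,e)\cdot(0,g)$ in $N\rtimes_x G$ (this uses normalisation of $x$), that $\theta$ restricts to the identity on $N$ because the left-hand square of \ref{defn theta} commutes, and that $\theta(0,g)=(\xi(g),g)$ by the definition of $\xi$. Multiplying $(n,e)(\xi(g),g)$ out in $M\rtimes_x G$ and using normalisation once more yields $\theta(n,g)=(n+\xi(g),g)$.

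For (ii), I would apply the homomorphism property of $\theta$ to the identity $(0,g_1)(0,g_2)=(x(g_1,g_2),g_1g_2)$ in $N\rtimes_x G$. Using part (i), the image of the right-hand side is $(x(g_1,g_2)+\xi(g_1g_2),g_1g_2)$, while the image of the left-hand side is $(\xi(g_1),g_1)(\xi(g_2),g_2)=(x(g_1,g_2)+\xi(g_1)+g_1\xi(g_2),g_1g_2)$. Comparing the $M$-components gives the cocycle relation $\xi(g_1g_2)=\xi(g_1)+g_1\xi(g_2)$; together with $\xi(e)=0$, which holds since $\theta$ preserves identities, this shows $\xi$ is a $1$-cocycle and hence defines a class in $H^1(G,M)$.

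For (iii), the hypothesis $H^1(G,M)=0$ forces $\xi$ to be a coboundary, say $\xi(g)=gm-m$ for a fixed $m\in M$. I would then compute conjugation by $(-m,e)\in M\rtimes_x e$ directly: using that $x$ is normalised and that $(-m,e)^{-1}=(m,e)$, one gets $(-m,e)(n,g)(-m,e)^{-1}=(n+(g-1)m,g)=(n+\xi(g),g)$ for all $(n,g)\in N\rtimes_x G$, which by (i) is precisely $\theta(n,g)$. Hence $\theta$ is conjugation by the element $(-m,e)$, which is of the required form. None of these steps is genuinely difficult — everything reduces to bookkeeping with the formula in \ref{twisted semi-direct}. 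The only point needing a little care is in (iii): one must check that the conjugating element can be taken to lie in the distinguished copy of $M$, which is exactly what the vanishing of $H^1(G,M)$ provides (as opposed to any weaker statement), and one must keep the signs straight in the dictionary between $1$-coboundaries for $M$ and inner automorphisms by elements of $M\rtimes_x e$.
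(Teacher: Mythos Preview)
Your proof is correct and follows essentially the same route as the paper: the same factorisation $(n,g)=(n,e)(0,g)$ for (i), the same comparison of $\theta((0,g_1)(0,g_2))$ computed two ways for (ii), and the same explicit conjugation check for (iii). The only cosmetic difference is that you write the conjugating element as $(-m,e)$ with the convention $a\mapsto aXa^{-1}$, whereas the paper writes $(m,e)^{-1}(n,g)(m,e)$; these are identical computations, and since $m$ ranges over all of $M$ the conclusion is the same.
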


 \begin{proof}
 \begin{enumerate}[(i)]
 \item This is a simple computation using the relation $(n,g)=(n,e)(0,g)$.
 \item
 Let $g_1,g_2\in G$. Using part (i), we get
  \begin{align*}
 \theta((0,g_1)(0,g_2))&=\theta(x(g_1,g_2),g_1g_2)=(x(g_1,g_2)+\xi(g_1g_2), g_1g_2),\quad\text{and}\\
 \theta((0,g_1)(0,g_2))&=(\xi(g_1),g_1)(\xi(g_2),g_2)
 =(x(g_1,g_2)+\xi(g_1)+g_1\xi(g_2),g_1g_2).
\end{align*}
Therefore we must have $\xi(g_1g_2)=\xi(g_1)+g_1\xi(g_2)$.

\item If $H^1(G,M)=0$ then  there exists an $m\in M$ such that $\xi(g)=gm-m$ for all $g\in G$. One then uses part (i) to check that
\[ (m,e)^{-1}(n,g)(m,e)=(n+gm-m,g)=\theta(n,g). \qedhere \]
 \end{enumerate}
 \end{proof}

We now give---with a view to motivating the calculations in the next section---a sketch
of how we  use the above proposition to prove a particular case of the main theorem. Suppose that we
have an Artinian local ring $(A,\m_A)$ with residue field $\pmb{k}$, and suppose that
we are given a subgroup $G\leq SL_n(A)$ with $G\mod\m_A=SL_n(\pmb{k})$. We'd like to know if a conjugate of $G$ contains $SL_n(W(\pmb{k})_A)$.

Suppose that $J$ is an ideal of $A$ killed by $\m_A$. To simplify the discussion further, let's assume that the quotient $A/J$ is $W_m:=W(\pmb{k})/p^m$, that
 $W(\pmb{k})_A=W(\pmb{k})/p^{m+1}$, and  that
$G\mod{J}=SL_n(W_m)$. The assumption that $W(\pmb{k})_A=W(\pmb{k})/p^{m+1}$ gives us a choice $\pmb{k}\subseteq J$, and
we can set up an identification of $SL_n(A)$ with a twisted semi-direct product
$\M_0(J)\rtimes_xSL_n(W_m)$ so that the  subgroup $SL_n(W(\pmb{k})_A)$ gets identified with
$\M_0(\pmb{k})\rtimes_xSL_n(W_m)$.
 In order to apply Proposition \ref{coh prop} and conclude that $G$ is, up to conjugation, $M\rtimes_xSL_n(W_m)$ for some $\F_p[SL_n(W_m)]$-submodule $M$ of $\M_0(J)$, we need to verify that:
\begin{itemize}
\item Assumption
\ref{injective} holds for $\F_p[SL_n(W_m)]$-submodules of $\M_0(J)$ (Theorem \ref{injective H2});
\item $H^1(SL_n(W_m),\M_0(J))=(0)$. This is a consequence of known results when $m=1$ (Theorem \ref{coh cps}) and Proposition \ref{coh sln} in `good' cases. Extra arguments
(cf, for instance, Proposition \ref{coh sln p|n}) are needed when $p$ divides $n$.
\end{itemize}
We can then conclude that a conjugate of $G$ contains $SL_n(W(\pmb{k})_A)$ provided
$\M_0(\pmb{k})\subset M$. This is derived from the injectivity of $H^2$s (in particular Corollary \ref{main corollary}); see  claim \ref{claim 1} in section 4.

\section{Cohomology of $SL_n(W/p^m)$}
We fix, as usual,  a finite field $\pmb{k}$ of characteristic $p$ and set $W_m:=W/p^m$
where $W:=W(\pmb{k})$ is  the  Witt ring of $\pmb{k}$. From here on we assume $n\geq 2$.
Our aim is to verify that assumption \ref{injective} holds. More precisely, we have the following:

 \begin{theorem}\label{injective H2}
  Let $\pmb{k}$ be a finite field of characteristic $p$ and cardinality at least $4$. Suppose $N\subseteq M$ are $\F_p[SL_n(W_m)]$-submodules of $\M_0(\pmb{k})^r$ for some integer $r\geq 1$.
 Then the induced map on second cohomology $H^2(SL_n(W_m),N)\to
 H^2(SL_n(W_m),M)$ is injective.
 \end{theorem}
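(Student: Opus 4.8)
The plan is to split the argument according to whether $p$ divides $n$, reducing each case to a cohomological vanishing statement that is supplied by the rest of this section. Write $G:=SL_n(W_m)$. The first point to record is that, since the matrix entries of elements of $\M_0(\pmb{k})^r$ lie in $\pmb{k}=W_m/pW_m$, the conjugation action of $G$ on $\M_0(\pmb{k})^r$ factors through the reduction map $G\twoheadrightarrow SL_n(\pmb{k})$; hence an $\F_p[G]$-submodule of $\M_0(\pmb{k})^r$ is the same thing as an $\F_p[SL_n(\pmb{k})]$-submodule, and we may use the known structure of $\M_0(\pmb{k})$ (the adjoint module $\mathfrak{sl}_n(\pmb{k})$) over $SL_n(\pmb{k})$: when $p\nmid n$ it is irreducible, whereas when $p\mid n$ the scalar matrices $\pmb{k}\cdot I_n$ form a trivial submodule with irreducible quotient $\M_0(\pmb{k})/\pmb{k}\cdot I_n$ $(=\mathfrak{psl}_n(\pmb{k}))$, so that the only composition factors of $\M_0(\pmb{k})$, and hence of every subquotient of $\M_0(\pmb{k})^r$, are the trivial module $\pmb{k}$ and $\mathfrak{psl}_n(\pmb{k})$.

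Suppose first that $p\nmid n$. Then $\M_0(\pmb{k})^r$ is a direct sum of $r$ copies of the irreducible module $\M_0(\pmb{k})$, hence is a semisimple $\F_p[G]$-module; consequently the submodule $N$ is a direct summand of $M$, say $M=N\oplus N'$. Since group cohomology is additive in the coefficients, $H^2(G,M)\cong H^2(G,N)\oplus H^2(G,N')$ and the map induced by $N\hookrightarrow M$ is the canonical inclusion of the first summand, which is injective. No hypothesis on $\pmb{k}$ beyond $|\pmb{k}|\geq 4$ is used here; the small exceptional fields excluded in the main theorem are irrelevant in this case and enter only later, through the separate fact that $H^1(G,\M_0(\pmb{k})^r)=0$.

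Now suppose $p\mid n$, so that $\M_0(\pmb{k})^r$ is no longer semisimple; here I would argue via long exact sequences. By functoriality the composite $H^2(G,N)\to H^2(G,M)\to H^2(G,\M_0(\pmb{k})^r)$ is the natural map attached to $N\hookrightarrow\M_0(\pmb{k})^r$, so it suffices to show $H^2(G,N)\to H^2(G,\M_0(\pmb{k})^r)$ is injective for every submodule $N$. Setting $Q:=\M_0(\pmb{k})^r/N$, the long exact sequence of $0\to N\to\M_0(\pmb{k})^r\to Q\to 0$ exhibits the kernel of that map as a quotient of $H^1(G,Q)$. Thus the theorem follows once we know $H^1(G,X)=0$ for every quotient $X$ of $\M_0(\pmb{k})^r$, and by a further long-exact-sequence argument together with induction on composition length this reduces to the vanishing of $H^1(G,V)$ for $V$ each of the two composition factors identified above.

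It remains to supply these two vanishing statements. For the trivial module, $H^1(G,\pmb{k})=\Hom(G,\pmb{k})=0$ because $G=SL_n(W_m)$ is perfect (automatically for $n\geq 3$, and for $n=2$ because $|\pmb{k}|\geq 4$ provides a unit $u\in W_m$ with $u^2-1$ invertible, realizing the generating elementary matrices as commutators). The genuinely hard input is $H^1(SL_n(W_m),\mathfrak{psl}_n(\pmb{k}))=0$ when $p\mid n$: the case $m=1$ is known (Theorem \ref{coh cps}), and the passage to general $m$ goes through the inflation--restriction sequence attached to $1\to\ker(SL_n(W_m)\to SL_n(\pmb{k}))\to SL_n(W_m)\to SL_n(\pmb{k})\to 1$, which is the business of Proposition \ref{coh sln p|n} (with Proposition \ref{coh sln} covering the corresponding step when $p\nmid n$). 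Controlling the first cohomology of this congruence kernel---a nontrivial $p$-group---with coefficients in the adjoint module when $p$ divides $n$ is the main obstacle of the whole argument; everything surrounding it is formal homological algebra.
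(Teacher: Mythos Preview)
Your case $p\nmid n$ is fine and matches the paper. The $p\mid n$ case, however, rests on a claim that is false: you assert that $H^1(SL_n(W_m),\V)=0$ (writing $\V=\mathfrak{psl}_n(\pmb{k})$), citing Theorem~\ref{coh cps} for the base case $m=1$. But Theorem~\ref{coh cps} says the \emph{opposite}: when $p\mid n$ the group $H^1(SL_n(\pmb{k}),\V)$ is a $1$-dimensional $\pmb{k}$-vector space, and Proposition~\ref{coh sln p|n} then gives $\dim_{\pmb{k}}H^1(SL_n(W_m),\V)=1$ for all $m$. Consequently $H^1(G,Q)$ does \emph{not} vanish for every quotient $Q$ of $\M_0(\pmb{k})^r$ (take $N=\s^r$, $Q=\V^r$), and your long-exact-sequence reduction collapses at exactly this point.

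The paper's argument works around this nonvanishing. Rather than asking for $H^1(G,Q)=0$, it filters $M$ by $M\cap\s^r$ and compares with the filtration $\s^r\subset\M_0(\pmb{k})^r$. The key input is Proposition~\ref{p dividing n}, whose content is that the connecting map $H^1(G,\V)\to H^2(G,\s)$ is zero (equivalently, $H^2(G,\s)\hookrightarrow H^2(G,\M_0(\pmb{k}))$); this is strictly weaker than $H^1(G,\V)=0$ and is what actually holds. One then uses that $\s^r$ and $\V^r$ are semisimple, so $H^2$-injectivity is automatic for their submodules, and a short diagram chase finishes. In other words, the obstacle you identified---controlling $H^1$ of the congruence kernel with adjoint coefficients---is real, but its resolution is that the relevant boundary map vanishes, not that the $H^1$ itself does.
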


The proof of Theorem \ref{injective H2} relies on knowledge of the first cohomology  of $SL_n(W_m)$ with coefficients in $\M_0(\pmb{k})$.  There are a couple more $SL_n(W_m)$ modules to consider when $p$ divides $n$, and we introduce these: Write $\s$  for the subspace of scalar matrices in $\M_0(\pmb{k})$. Thus $\s=(0)$ unless $p$ divides $n$ in which case $\s=\{\lambda I : \lambda\in\pmb{k}\}$. If $p|n$ we define $\V:=\M_0(\pmb{k})/\s$.

The first cohomology of $SL_n(W_m)$ with coefficients in $\M_0(\pmb{k})$ or $\V$ is well understood when $m=1$, and  we refer to Cline, Parshall and Scott \cite[Table 4.5]{CPS} for the following the  following result. (For results on $H^2(SL_n(\pmb{k}),\M_0(\pmb{k}))$ see \cite{burichenko}, \cite{georgia alg group}.)
\begin{theorem}\label{coh cps} Assume that the cardinality of $\pmb{k}$ is at least $4$.
\begin{itemize}
\item  Suppose $(n,p)=1$. Then  $H^1(SL_n(\pmb{k}),\M_0(\pmb{k}))$ is always $0$ except for
$H^1(SL_2(\F_5),\M_0(\pmb{k}))$ which is a $1$-dimensional $\pmb{k}$-vector space.
\item Suppose $p|n$. Then $H^1(SL_n(\pmb{k}),\V)$ is  a $1$-dimensional $\pmb{k}$-vector space.
\end{itemize}
\end{theorem}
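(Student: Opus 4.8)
The statement is the cohomology calculation quoted in the paper from \cite[Table 4.5]{CPS}; the plan is to explain how one would establish it. The idea is to squeeze $H^1(SL_n(\pmb{k}),M)$, for $M$ equal to $\M_0(\pmb{k})$ or (when $p\mid n$) $\V$, between an upper bound obtained by restriction to a Borel subgroup together with a weight count, and the correct value obtained from rational cohomology of the ambient algebraic group, with the discrepancy for small $\pmb{k}$ handled by hand.

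First I would reduce to the Borel. Write $\pmb{k}=\F_q$ with $q=p^f$, and let $B=TU\le SL_n(\pmb{k})$ be the upper triangular Borel, $T$ the diagonal torus and $U$ the group of upper unitriangular matrices, which is a Sylow $p$-subgroup. The index $[SL_n(\pmb{k}):B]$, being the number of complete flags in $\pmb{k}^n$, is prime to $p$, so restriction gives an injection $H^1(SL_n(\pmb{k}),M)\hookrightarrow H^1(B,M)$; and since $|T|=(q-1)^{n-1}$ is also prime to $p$, the Lyndon--Hochschild--Serre sequence for the normal subgroup $U$ of $B$ collapses to $H^1(B,M)\cong H^1(U,M)^T$. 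Filtering $U$ by its lower central series and taking $T$-invariants, every contribution becomes, after extending scalars to $\overline{\F}_p$, a weight space $M_{p^j\alpha}$ for a positive root $\alpha$ and $0\le j<f$; the bottom layer $\Hom_T(U^{\mathrm{ab}},M^{[U,U]})\otimes\overline{\F}_p$ only involves the simple roots $\alpha_1,\dots,\alpha_{n-1}$. Since the nonzero weights of $\M_0(\pmb{k})$ and of $\V$ are exactly the roots $e_a-e_b$, each of multiplicity one, and no Frobenius twist $p^j\alpha$ with $j\ge1$ is again a root, only the $j=0$ weights survive; this already bounds $\dim_{\pmb{k}}H^1(SL_n(\pmb{k}),M)$ by an explicit finite number.

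To pin down the value I would pass to the algebraic group $\mathbf{G}=SL_n$ over $\overline{\F}_p$. Viewing $\M_0(\pmb{k})$ and $\V$ as restrictions of rational $\mathbf{G}$-modules, one has $\M_0(\pmb{k})=\nabla(\widetilde{\alpha})$ when $p\nmid n$, so $H^1(\mathbf{G},\M_0(\pmb{k}))=\operatorname{Ext}^1_{\mathbf{G}}(\Delta(0),\nabla(\widetilde{\alpha}))=0$ by the orthogonality of standard and costandard modules; while for $p\mid n$ we have $\V=L(\widetilde{\alpha})$, which sits in a non-split sequence
\[ 0\longrightarrow\V\longrightarrow\nabla(\widetilde{\alpha})\longrightarrow L(0)\longrightarrow 0, \]
and then $\operatorname{Ext}^i_{\mathbf{G}}(\Delta(0),\nabla(\widetilde{\alpha}))=0$ for $i>0$ together with $\nabla(\widetilde{\alpha})^{\mathbf{G}}=0$ gives $H^1(\mathbf{G},\V)\cong H^0(\mathbf{G},L(0))=\overline{\F}_p$. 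Over the finite group this last sequence is realised concretely as $0\to\V\to\M(\pmb{k})/\s\to\pmb{k}\to0$, which is non-split because $SL_n(\pmb{k})$ is perfect for $q\ge4$. Comparing these rational computations with the cohomology of $SL_n(\pmb{k})$ through the generic cohomology of Cline--Parshall--Scott--van der Kallen then yields the asserted values for all sufficiently large $q$.

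The hard part is precisely those finitely many small fields, where the cohomology of $SL_n(\pmb{k})$ exceeds the generic value and the crude Borel bound of the first step is not sharp. Here one must verify directly that the surviving simple-root cocycles do not prolong compatibly across the Bruhat cells of the simple reflections --- which kills $H^1(SL_n(\pmb{k}),\M_0(\pmb{k}))$ when $(n,p)=1$, the single survivor being $H^1(SL_2(\F_5),\M_0(\pmb{k}))$ (there $\M_0(\pmb{k})=\operatorname{Sym}^2$ of the standard module, $SL_2(\F_5)$ is the binary icosahedral group, and an extra one-dimensional class appears) --- and likewise that the bound for $\V$ is attained with equality one. It is exactly this small-field bookkeeping that forces the hypotheses $|\pmb{k}|\ge4$ and $\pmb{k}\ne\F_5$ when $n=2$; the complete verification is the content of \cite[Table 4.5]{CPS}.
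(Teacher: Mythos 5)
The paper offers no proof of this statement: Theorem \ref{coh cps} is imported verbatim from Cline--Parshall--Scott \cite[Table 4.5]{CPS}, so there is no internal argument to measure your proposal against. Your sketch is a fair outline of the method behind such computations (restriction to the Borel, giving $H^1(SL_n(\pmb{k}),M)\hookrightarrow H^1(\U,M)^{\T}$, a weight count along the filtration of $\U$, and comparison with rational and generic cohomology of the algebraic group $SL_n$), and in that sense it is consistent with the source the paper cites. But it is not a proof of the theorem as stated: the generic-cohomology comparison only identifies $H^1(SL_n(\pmb{k}),\M_0(\pmb{k}))$ and $H^1(SL_n(\pmb{k}),\V)$ for sufficiently large $q$, whereas the statement is asserted for every field with at least $4$ elements, and the exact values for the remaining small fields --- including the exceptional one-dimensional $H^1(SL_2(\F_5),\M_0(\pmb{k}))$ and the equality $\dim_{\pmb{k}}H^1(SL_n(\pmb{k}),\V)=1$ for all $q\geq 4$ with $p\mid n$ --- are precisely what you defer back to \cite[Table 4.5]{CPS}. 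So the proposal amounts to an exposition of the CPS strategy plus the same citation, not an independent verification; that is acceptable only to the extent that the paper itself treats the result as a black box.

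Two smaller inaccuracies. First, your claim that no Frobenius twist $p^{j}\alpha$ with $j\geq 1$ of a root is again a root is false as a statement about characters of the finite torus when $q$ is small: for $n=2$, $q=4$ one has $t^{3}=1$ for all $t\in\F_4^{\times}$, so the twist $2\alpha$ coincides with $-\alpha$ on $\T$. Coincidences of exactly this kind are what drive the case analysis the paper carries out for $H^1(SL_n(\pmb{k}),\F_p)$ and $H^2(SL_n(\pmb{k}),\F_p)$ (Theorem \ref{trivial H1 H2}), and they are the reason your crude Borel bound cannot simply be quoted for the small fields you later wave at. Second, the hypothesis of Theorem \ref{coh cps} is only that $|\pmb{k}|\geq 4$: the case $n=2$, $\pmb{k}=\F_5$ is not excluded by this statement --- it is the listed exception with one-dimensional $H^1$; the exclusion of $\F_5$ for $n=2$ (and of $\F_4$ for $n=3$) belongs to the hypotheses of the main theorem, not to this cohomology computation.
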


Throughout this section, we will denote by $\Gamma$ the kernel of the $\mod{p^m}$-reduction map
$SL_n(W_{m+1})\to SL_n(W_m)$. We have suppressed the dependence on $m$ in our notation; this shoudn't create any great inconvenience. If $M\in \M_0(W)$ is a trace $0$, $n\times n$-matrix with coefficients in
$W$ then $I+p^mM\mod{p^{m+1}}$ is in $\Gamma$, and this sets up a natural identification of
$\M_0(\pmb{k})$ and $\Gamma$ compatible with $SL_n(W_m)$-action. The extension of Theorem \ref{coh cps} to the group $SL_n(W_m)$ for arbitrary $m$, carried out  in subsections \ref{section H^1(M_0(k)} and \ref{section H^1(V)}, then relies on the injectivity of transgression maps from $H^1(\Gamma, -)^{SL_n(W_m)}$ to $H^2(SL_n(W_m),-)$.

We end---before we go into the main computations of this section---by reviewing the structure of $\M_0(\pmb{k})$, and therefore of $\Gamma$, as an $\F_p[SL_n(\pmb{k})]$-module. For $1\leq i,j\leq n$, $e_{ij}$ denotes the matrix unit which is $0$ at all places except at the $(i,j)$-th place where it is $1$.

\begin{lemma}\label{lemma1} Assume that $\pmb{k}\neq \F_2$ if $n=2$.
\begin{enumerate}[(i)]
\item If $X$ is an $\F_p[SL_n(\pmb{k})]$-submodule of $\M_0(\pmb{k})$ then either $X$ is a  subspace of $\s$, or $X=\M_0(\pmb{k})$. Thus $\M_0(\pmb{k})/\s$ is a simple $\F_p[SL_n(\pmb{k})]$-module, and the sequence
        \begin{equation}\label{exact sequence for ad}
0\xrightarrow{}\s\xrightarrow{}\M_0(\pmb{k})\xrightarrow{} \V
\xrightarrow{}0
\end{equation}
is non-split when $p|n$.
    \item If $\phi: \M_0(\pmb{k})\to \M_0(\pmb{k})$ is a homomorphism of $\F_p[SL_n(\pmb{k})]$-modules then there exists a $\lambda\in \pmb{k}$ such that $\phi(A)=\lambda A$ for all $A\in \M_0(\pmb{k})$.
    \item Suppose $p|n$ and $\phi: \M_0(\pmb{k})\to \V$ is a homomorphism of $\F_p[SL_n(\pmb{k})]$-modules. Then $\phi(\s)=(0)$ and the induced map $\phi:\V\to\V$ is multiplication by a scalar in $\pmb{k}$.
\end{enumerate}
\end{lemma}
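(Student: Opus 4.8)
The plan is to treat the three parts together, building everything on the structure of $\M_0(\pmb{k})$ as a module for the group generated by the root subgroups of $SL_n(\pmb{k})$. First I would fix the decomposition $\M_0(\pmb{k}) = \mathfrak{h} \oplus \bigoplus_{i\neq j} \pmb{k}\, e_{ij}$, where $\mathfrak{h}$ is the space of trace-$0$ diagonal matrices, and recall how the unipotent elements $I + t e_{ij}$ act by conjugation: conjugation by $I + t e_{ij}$ sends $e_{kl}$ to $e_{kl} + \delta_{jk} t e_{il} - \delta_{li} t e_{kj} - \delta_{jk}\delta_{li} t^2 e_{ij}$, and acts on the diagonal part producing off-diagonal terms. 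The key input is that, under the stated hypothesis ($\pmb{k}\neq\F_2$ when $n=2$, so that $\pmb{k}$ has an element $t$ with $t\neq 0, \pm 1$, or more robustly that the root subgroups genuinely move things around), no proper $SL_n(\pmb{k})$-submodule can contain any $e_{ij}$ with $i\neq j$: starting from one off-diagonal matrix unit, repeated application of root-group conjugations and taking $\pmb{k}$-linear combinations (using that each $\pmb{k}\, e_{ij}$ is spanned by $\{I + t e_{kl}\} \cdot e_{ij}$-type outputs) generates all of $\M_0(\pmb{k})$. For part (i), I would then argue that if an $\F_p[SL_n(\pmb{k})]$-submodule $X$ is not contained in $\s$, it contains an element with a nonzero off-diagonal entry or a non-scalar diagonal part; conjugating by a suitable permutation matrix and a unipotent, and subtracting, one extracts an actual off-diagonal matrix unit (this is where the restriction $\pmb{k}\neq\F_2$, $n=2$ is used, to be able to separate a diagonal entry $\mathrm{diag}(a,-a)$ with $a\neq 0$ from its conjugates), hence $X = \M_0(\pmb{k})$. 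Simplicity of $\V$ is then immediate, and non-splitness of \eqref{exact sequence for ad} when $p\mid n$ follows since a splitting would give a submodule of $\M_0(\pmb{k})$ complementary to $\s$, hence isomorphic to $\V$, contradicting part (i) (any nonzero submodule is all of $\M_0(\pmb{k})$, which is strictly larger than $\V$ since $\s\neq 0$).

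For part (ii), given $\F_p[SL_n(\pmb{k})]$-linear $\phi:\M_0(\pmb{k})\to\M_0(\pmb{k})$, I would compute $\phi$ on the matrix units directly. Writing $h = e_{11} - e_{22}$ (a trace-$0$ matrix fixed up to sign by a torus), and using that $\phi$ commutes with conjugation by diagonal tori, one sees $\phi$ preserves each weight space; since the off-diagonal weight spaces $\pmb{k}\,e_{ij}$ are one-dimensional over $\pmb{k}$, $\phi(e_{ij}) = \lambda_{ij} e_{ij}$ for scalars $\lambda_{ij}\in\pmb{k}$. Applying $\phi$-equivariance under conjugation by a unipotent $I + t e_{jk}$ (which mixes $e_{ij}$ with $e_{ik}$, $e_{kj}$ with adjacent units, etc.) forces all the $\lambda_{ij}$ to coincide — call the common value $\lambda$ — and a further equivariance relation pins $\phi$ on the diagonal part $\mathfrak{h}$ to be multiplication by the same $\lambda$ (using $[e_{ij}, e_{ji}] = e_{ii}-e_{jj}$ translated into a conjugation identity, or simply that $\mathfrak{h}$ is generated as a module-theoretic consequence by brackets of off-diagonal units). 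Hence $\phi = \lambda\cdot\mathrm{id}$. For part (iii), the composite $\s\hookrightarrow \M_0(\pmb{k}) \xrightarrow{\phi} \V$ is an $SL_n(\pmb{k})$-map from the trivial module $\s$ to $\V$; since $\V$ is simple and nontrivial (for $n\geq 2$, $\pmb{k}\neq\F_2$ when $n=2$) it has no trivial submodule, so $\phi(\s)=0$ and $\phi$ factors through $\V\to\V$; that induced endomorphism is a scalar by the same weight-space argument as in (ii), applied now to the simple module $\V$, or by invoking (ii) via a lift.

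The main obstacle is the first step: proving cleanly that every off-diagonal matrix unit generates all of $\M_0(\pmb{k})$ under the $SL_n(\pmb{k})$-action, and handling the diagonal Cartan part $\mathfrak{h}$ correctly when $p\mid n$ (so that $\mathfrak{h}$ itself contains the scalars $\s$ and is not semisimple). This is precisely the place where the exclusion of $(n,\pmb{k})=(2,\F_2)$ is forced, since over $\F_2$ with $n=2$ the group $SL_2(\F_2)$ is too small and $\M_0(\F_2)$ decomposes differently. I expect the rest — the weight-space bookkeeping in (ii) and (iii) — to be routine linear algebra once the generation statement and the behaviour of $\mathfrak{h}$ are nailed down, so I would spend most of the write-up making the orbit-of-a-matrix-unit argument precise and explicitly checking the edge case.
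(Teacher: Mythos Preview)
Your approach to part (i) is essentially parallel to the paper's, though the paper organizes it more efficiently by first observing that $\M_0(\pmb{k})^{\U} = \s + \pmb{k}\,e_{1n}$ (where $\U$ is the unipotent upper-triangular subgroup): any nonzero $\F_p[SL_n(\pmb{k})]$-submodule $X$ has nonzero $\U$-invariants, so if $X \not\subseteq \s$ then $X$ already contains some $aI + b e_{1n}$ with $b \neq 0$, and one extracts a genuine off-diagonal unit by a short case split on $a$. This replaces your ``conjugate and subtract to extract a matrix unit'' step with a single structural observation.

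For part (ii) your route diverges more substantially, and there is a weak point worth flagging. You claim that commutation with the diagonal torus forces $\phi(e_{ij}) = \lambda_{ij} e_{ij}$ because the off-diagonal weight spaces are one-dimensional. But for $n=2$ the torus character on $\pmb{k}\,e_{12}$ is $t \mapsto t^2$: over $\F_3$ this character is trivial (so all of $\M_0$ is one weight space), and over $\F_5$ or any $\F_q$ with $q \equiv 1 \pmod 4$ the characters on $e_{12}$ and $e_{21}$ coincide. These cases are permitted by the lemma's hypothesis, and in them the torus alone does not give $\phi(e_{ij}) \in \pmb{k}\,e_{ij}$. You could patch this by bringing the unipotent conjugations in earlier, but the paper sidesteps the issue entirely: it finds a single $\lambda \in \pmb{k}$ for which $\phi - \lambda\cdot\mathrm{id}$ has nontrivial kernel (using $\phi(e_{1n}) \in \M_0(\pmb{k})^{\U} = \pmb{k}\,e_{1n}$ when $p \nmid n$, or $\phi(\s) \subseteq \s$ when $p \mid n$), and then invokes part (i) together with a dimension count to force $\ker(\phi - \lambda) = \M_0(\pmb{k})$. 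This is shorter and uniform in $\pmb{k}$. Part (iii) is handled similarly in both approaches.
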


\begin{proof} Let $\U$ be the subgroup $SL_n(\pmb{k})$ consisting of upper triangular matrices with
ones on the diagonal. As an $\F_p[\U]$-module the semi-simplification of $\M_0(\pmb{k})$ is a direct sum of copies of $\F_p$ and $\M_0(\pmb{k})^\U=\s+\pmb{k}e_{1n}$.  Therefore if the $\F_p[SL_n(\pmb{k})]$-submodule $X$ is not a subspace of $\s$ then $X$ contains a matrix  $aI+be_{1n}$ with $b\neq 0$.

Suppose first that $a=0$.  By considering the action of diagonal matrices, we see that $X$ must in fact contain the full $\pmb{k}$-span of $e_{1n}$. Conjugation by $SL_n(\pmb{k})$ then implies that $X\supseteq \pmb{k}e_{ij}$ whenever $i\neq j$. Now,  under the action of $SL_n(\pmb{k})$, we can conjugate $e_{ij}+e_{ji}$ with  $i\neq j$ to $e_{ii}-e_{jj}$ when $p$ is odd and to
$e_{ii}-e_{jj}+e_{ij}$ when $p=2$. In any case, we can conclude that $X\supseteq \pmb{k}(e_{ii}-e_{jj})$ whenever $i\neq j$. It follows that $X$ must be the whole space $\M_0(\pmb{k})$.

Suppose now $a\neq 0$. Thus $\s\neq 0$ and $p$ divides $n$. When $n\geq 3$ the relation
\[
(I+e_{21})(aI+be_{1n})(I-e_{21})=aI+be_{1n}+be_{2n}\]
implies $be_{2n}$ and, consequently, $be_{1n}$ are in $X$, and so $X=\M_0(\pmb{k})$. When $n=2$---so $p=2$ and $\pmb{k}$ has at least $4$ elements---we can find a $0\neq \lambda \in \pmb{k}$ with $\lambda^2\neq 1$. Conjugating by
$\tiny\begin{pmatrix}\lambda &0\\ 0&1/\lambda\end{pmatrix}$, we see that $aI+b\lambda^2e_{1n}\in X$. This gives $0\neq b(\lambda^2-1)e_{1n}\in X$ and so $X=\M_0(\pmb{k})$.

Now for part (ii). Since $\phi$ commutes with the action of $SL_n(\pmb{k})$, the subspaces $\M_0(\pmb{k})^{SL_n(\pmb{k})}$ and
$\M_0(\pmb{k})^{\U}$ are invariant under $\phi$.
When  $p$  divides $n$ the first of these  gives $\phi\s\subseteq \s$; if $p$ does not divide $n$, then $\M_0(\pmb{k})^\U=\pmb{k}e_{1n}$ and so we must have $\phi(e_{1n})=\lambda e_{1n}$ for some $\lambda\in \pmb{k}$. In any case, we can find a $\lambda\in \pmb{k}$ such that the $\F_p[SL_n(\pmb{k})]$-module homomorphism $\phi-[\lambda]: \M_0(\pmb{k})\to \M_0(\pmb{k})$ given by $A\to \phi(A)-\lambda A$ has non-trivial kernel. We can then conclude,  by part (i) and a simple dimension count, that the kernel has to be the whole space $\M_0(\pmb{k})$, and therefore $\phi$ must be multiplication by $\lambda$.

For part (iii), that $\s\subseteq\ker\phi$ follows from part (i). The second part is proved along the same lines as the proof of part (ii) by considering $\phi(e_{1n})$.
\end{proof}

\subsection{Determination of $H^1(SL_n(W_m),\pmb{k})$}\label{section H^1(k)}
Let $\pmb{k}$ have cardinality $p^d$. Our aim is to show that $H^1(SL_n(W_m),\pmb{k})$
vanishes, subject to some mild restrictions on $\pmb{k}$. We do this inductively using
inflation--restriction after dealing with the base case $m=1$ by adapting Quillen's
result
in the general linear group case (see section 11 of \cite{quillen}).

To start off we impose no restrictions other than $n\geq 2$.
Denote
by $\T$ the subgroup of diagonal matrices in $SL_n(\pmb{k})$ and write $(t_1, t\ldots , t_n)$ for  the diagonal matrix with
$(i,i)$-th entry $t_i$. The image of the  homomorphism $\T\to  (\pmb{k}^\times)^{n-1}$
given by
\[
(t_1,\ldots ,t_n)\to (t_2/t_1,\ldots , t_n/t_{n-1})
\] has index $h:=\text{hcf} (n,p^d-1)$ in $(\pmb{k}^\times)^{n-1}$.
Taking this into account and following the remark at end of section 11 of
\cite{quillen}, the proof covering the general
linear group case  only needs a small modification at one place\footnote{The congruence
just before Lemma 16 changes to a congruence modulo $(p^d-1)/h$.} to give the following:

\begin{theorem} \label{quillen sln} Let $\pmb{k}$ be a finite field of characteristic $p$ and cardinality
$p^d$. Then $H^i(SL_n(\pmb{k}),\F_p)=0$ for $0<i<d(p-1)/h$ where $h:=\text{hcf}(n,p^d-1)$.
\end{theorem}

For a fixed $n$, Theorem \ref{quillen sln} implies the vanishing of
$H^1(SL_n(\pmb{k}),\pmb{k})$ and  $H^2(SL_n(\pmb{k}),\pmb{k})$ for fields with
sufficiently large cardinality. To get a stronger result for $H^1$ and $H^2$ covering fields with small cardinality, we will need to carry out  a slightly more detailed analysis.

In order to show $H^\ast(SL_n(\pmb{k}),\F_p)=0$ it is enough
to check that $H^\ast(\U,\F_p)^{\T}=0$ where $\U$ is the subgroup of upper triangular matrices with
ones on the diagonal. Fix an algebraic closure $\overline\F_p$ of $\F_p$ containing $\pmb{k}$.  Since $\T$ is an abelian group of order prime to $p$, the
$\overline\F_p[\T]$-module $H^\ast(\U,\F_p)\otimes_{\F_p}\overline\F_p$ is
isomorphic to a direct sum of characters; we will then have to check that none of these  can be
the trivial character.

Let $\Delta^+$ be the set of characters
$a_{ij}:\T\to \pmb{k}^\times$ given by
$a_{ij}(t_1,\ldots ,t_n):=t_i/t_j$ where $1\leq i< j\leq n$. The analysis in \cite[section 11]{quillen} shows that the Poincar\'{e} series
of $H^*(\U)$ as a representation of $\T$, denoted by $\text{P.S.}(H^*(\U))$, satisfies
the bound
\begin{equation}\label{upper bound 1}
\text{P.S.}(H^*(\U)):=
\sum_{i\geq 0}\text{cl}(H^i(\U,\F_p)\otimes_{\F_p}\overline\F_p)z^i \ll
\prod_{a\in\Delta^+ }\prod_{b=0}^{d-1}\frac{1+a^{-p^b}z}{1-a^{-p^b}z^2}
\end{equation}
in $R_{\overline\F_p}(\T)[[z]]$. Here $R_{\overline\F_p}(\T)$ is the
Grothendieck group for representations of $\T$ over $\overline\F_p$, and $\text{cl}(V)$
is the class of a $\overline\F_p[\T]$-module $V$
in $R_{\overline\F_p}(\T)$; given $\overline\F_p[\T]$-modules $V_0, V_1, V_2,\ldots$ and $W_0, W_1, W_2,\ldots$, the bound
\[
\sum_{i\geq 0}\text{cl}(W_i)z^i\ll \sum_{i\geq 0}\text{cl}(V_i)z^i \]
in $R_{\overline\F_p}(\T)[[z]]$ expresses the property that  $W_i$ is isomorphic to an $\overline\F_p[\T]$-submodule of $V_i$ for every integer $i\geq 0$. Thus the right hand side of \ref{upper bound 1} tells us which characters \emph{might} occur in the decomposition of the
$\overline\F_p[\T]$-module $H^\ast(\U,\F_p)\otimes_{\F_p}\overline\F_p$.

 Note that our choice of a positive root system $\Delta^+$ is different from the one  in \cite{quillen}; the choice
made there leads to a sign discrepancy in the upper
bound \ref{upper bound 1} (but doesn't affect any of the results derived from it).  If we use the ordering on
$\Delta^+$ given by  $(i',j')\leq (i,j)$
if either $i'<i$, or $i'=i$ and $j\leq j'$, then with  notation as in \cite{quillen}  we have a central extension
\[ 0\to \pmb{k}_a\to\U/\U_a\to \U/\U_{a'}\to 1\] with $\T$-action and the argument in
\cite{quillen}
carries
through verbatim.

It is then
straightforward to work out
the coefficients of $z$ and $z^2$ on the right hand side of \ref{upper bound 1}, and we
can conclude the following: If  $\chi: \T\to \overline\F_p^\times$ is a character
occurring in $\text{cl}(H^i(\U,\F_p)\otimes_{\F_p}\overline\F_p)$, $i=1, 2$, then
$\chi^{-1}$ is either
\begin{itemize}
\item a Galois conjugate of a positive root i.e.  $\chi^{-1}=a^{p^b}$  for some positive root $a\in\Delta$ and integer $0\leq b< d$, or
\item  a  product  $\alpha\alpha'$ where $\alpha$, $\alpha'$ are Galois conjugates
of positive roots and $\alpha\neq \alpha'$. (This case  happens only when $i=2$.)
\end{itemize}

Thus, taking Galois
conjugates as needed, we need to determine when  $a_{ij}$ or
 $a_{ij}a_{kl}^{p^b}$ is the trivial character, where $a_{ij}, a_{kl}\in \Delta^+$ and $0<b<d$ in the case
 $(i,j)=(k,l)$. The first case is immediate: $a_{ij}$ is never the trivial character except when $\pmb{k}=\F_2$, or $n=2$ and
$\pmb{k}=\F_3$.

Now for the second case. We now have  integers $1\leq i<j\leq n$, $1\leq k<l\leq n$,
$0\leq b<d$ with $b\neq 0$ if $(i,j)=(k,l)$ such that the following relation holds:
\begin{equation}\label{relation 1}
\frac{t_i}{t_j}\Big(\frac{t_k}{t_l}\Big)^{p^b}=1\qquad \text{for all}\quad
(t_1,\ldots , t_n)\in \T.
\end{equation}
We will determine for which fields the above relation holds
by specialising suitably. We exclude $\pmb{k}=\F_2$ in what follows.

Firstly, let's consider the case when $i$, $j$, $k$ and $l$ are distinct. Thus $n\geq 4$. We can specialise \ref{relation 1} to $t_k=t_l=1$ and
$t_i=t_j^{-1}=t$ for $t\in \pmb{k}^\times$. We then get $t^2=1$ for all
 $t\in \pmb{k}^\times$---which implies $\pmb{k}$ can only be $\F_3$. Furthermore, if $n\geq 5$ we have an even better specialisation: we can choose $t_j=t_k=t_l=1$ and $t_i$ freely, and  conclude \ref{relation 1} never holds.

Next, suppose the cardinality $\{i,j,k,l\}$ is $3$. If we suppose $\{i,j,k,l\}=\{i,k,l\}$ (the case $\{i,j,k,l\}=\{j,k,l\}$ is similar), then specialisation to $t_j=t_k=t_l=t^{-1}$ and
$t_i=t^2$ implies that $t^3=1$ for all $t\in \pmb{k}^\times$ i.e.  $\pmb{k}$ is a subfield of $\F_4$. If in addition $n\geq 4$ we can take $t_k=t_l=1$ and then there is
a free choice for either $t_i$, so \ref{relation 1} cannot hold.

Finally consider the case when the cardinality of $\{i,j,k,l\}$ is $2$. We must then
have $i=k$, $j=l$ and $1\leq b<d$. Taking $t_i=t=t_j^{-1}$, we get
$t^{2(1+p^b)}=1 $ for all $t\in \pmb{k}^\times$, and so $2(1+p^b)=p^d-1$. This only works when $\pmb{k}=\F_9$. Moreover,
when $n\geq 3$, we can set $t_j=1$ and then the relation \ref{relation 1} implies $t^{p^b+1}=1$ for all $t\in \pmb{k}^\times$. So $p^b+1=p^d-1$ and $\pmb{k}$ is necessarily $\F_4$. Therefore in the case $(i,j)=(k,l)$ the relation
\ref{relation 1} holds only when $n=2$ and $\pmb{k}=\F_9$.

We have thus proved the first part of the following:
\begin{theorem}\label{trivial H1 H2}
Let $\pmb{k}\neq \F_2$ be a finite field of characteristic $p$ and let $n\geq 2$ be an integer. Further,  assume that
\begin{itemize}
\item if $n=4$ then $\pmb{k}$ is not $\F_3$;
\item if $n=3$ then $\pmb{k}\neq \F_4$;
\item if $n=2$ then $\pmb{k}$ is not $\F_3$ or $\F_9$.
\end{itemize} Then
$H^1(SL_n(\pmb{k}),\F_p)$ and $H^2(SL_n(\pmb{k}),\F_p)$ are both trivial. Furthermore, under the same assumptions on $\pmb{k}$, we have $H^1(SL_n(W_m),\pmb{k})=(0)$ for all integers $m\geq  1$.
\end{theorem}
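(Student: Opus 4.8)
The first part of the statement—that $H^1(SL_n(\pmb{k}),\F_p)$ and $H^2(SL_n(\pmb{k}),\F_p)$ vanish under the stated restrictions—has in fact just been established by the root-character analysis preceding the statement: one enumerates which characters $\chi$ of $\T$ can occur in $H^1(\U,\F_p)\otimes\overline\F_p$ and $H^2(\U,\F_p)\otimes\overline\F_p$, observes (from the upper bound \ref{upper bound 1}) that $\chi^{-1}$ must be a Galois conjugate of a positive root or an unordered product of two distinct such conjugates, and then checks case-by-case that none of these equals the trivial character once $\pmb{k}$ is excluded as listed. Since $H^\ast(SL_n(\pmb{k}),\F_p)=H^\ast(\U,\F_p)^{\T}$ (the index $[SL_n(\pmb{k}):\U\T]$ being prime to $p$, so restriction to the Sylow-normalizer subgroup $\U\T$ is injective, and $\T$ has order prime to $p$), vanishing of the $\T$-invariants in degrees $1$ and $2$ gives the claim. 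So for the write-up the only genuinely new content is the final sentence: $H^1(SL_n(W_m),\pmb{k})=(0)$ for all $m\geq 1$.

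For that, the plan is induction on $m$. The case $m=1$ is exactly $H^1(SL_n(\pmb{k}),\pmb{k})=\pmb{k}\otimes_{\F_p}H^1(SL_n(\pmb{k}),\F_p)=(0)$, just proved. For the inductive step, let $m\geq 1$ and consider the short exact sequence of groups
\[
1\longrightarrow \Gamma\longrightarrow SL_n(W_{m+1})\longrightarrow SL_n(W_m)\longrightarrow 1,
\]
where, as fixed in the text, $\Gamma=\ker(SL_n(W_{m+1})\to SL_n(W_m))$ is identified with $\M_0(\pmb{k})$ as an $\F_p[SL_n(W_m)]$-module (via $M\mapsto I+p^mM$), and $SL_n(W_m)$ acts on $SL_n(W_m)$ through its quotient $SL_n(\pmb{k})$ on the coefficients $\pmb{k}$. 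The inflation–restriction exact sequence in low degrees reads
\[
0\to H^1(SL_n(W_m),\pmb{k})\to H^1(SL_n(W_{m+1}),\pmb{k})\to H^1(\Gamma,\pmb{k})^{SL_n(W_m)}.
\]
The left-hand term vanishes by the inductive hypothesis, so it suffices to show the restriction map lands in the kernel of the transgression $H^1(\Gamma,\pmb{k})^{SL_n(W_m)}\to H^2(SL_n(W_m),\pmb{k})$, i.e. to show that $H^1(\Gamma,\pmb{k})^{SL_n(W_m)}$ itself is either zero or injects under transgression; the cleanest route is to bound $H^1(\Gamma,\pmb{k})^{SL_n(W_m)}=\Hom_{SL_n(W_m)}(\Gamma,\pmb{k})=\Hom_{\F_p[SL_n(\pmb{k})]}(\M_0(\pmb{k}),\pmb{k})$.

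The key step is therefore the computation $\Hom_{\F_p[SL_n(\pmb{k})]}(\M_0(\pmb{k}),\pmb{k})=(0)$. When $(n,p)=1$ the module $\M_0(\pmb{k})$ contains no $SL_n(\pmb{k})$-submodule on which the action is trivial except $(0)$ — indeed by Lemma \ref{lemma1}(i) any nonzero submodule is either contained in $\s=(0)$ or is all of $\M_0(\pmb{k})$, which is not a sum of trivial modules — so any $\F_p[SL_n(\pmb{k})]$-homomorphism to the trivial module $\pmb{k}$ is zero. When $p\mid n$, one has the filtration $0\subset\s\subset\M_0(\pmb{k})$ with $\s$ the trivial module $\pmb{k}$ and $\V=\M_0(\pmb{k})/\s$ simple and non-trivial (Lemma \ref{lemma1}(i) again); a homomorphism $\M_0(\pmb{k})\to\pmb{k}$ must kill the simple non-trivial quotient's preimage issues — more precisely it factors through the maximal trivial quotient of $\M_0(\pmb{k})$, and since the sequence $0\to\s\to\M_0(\pmb{k})\to\V\to 0$ is non-split with $\V$ simple non-trivial, the maximal trivial quotient of $\M_0(\pmb{k})$ is $(0)$, so again $\Hom=(0)$. (Alternatively one invokes Lemma \ref{lemma1}(ii),(iii), noting a homomorphism into $\pmb{k}\cong\s\subseteq\M_0(\pmb{k})$ composed with the inclusion would be multiplication by a scalar, forcing the scalar to be $0$ since the image is scalar.) Hence the restriction term vanishes and $H^1(SL_n(W_{m+1}),\pmb{k})=(0)$, completing the induction.

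The main obstacle is really just organizing the $p\mid n$ case cleanly: one must be careful that $\pmb{k}$, as a module for $SL_n(W_m)$, is the trivial module, so $H^1(\Gamma,\pmb{k})=\Hom(\Gamma,\pmb{k})$ with no twist, and that the only subtlety — the presence of the trivial submodule $\s$ when $p\mid n$ — does not produce a nonzero homomorphism, which is exactly what the non-splitness in Lemma \ref{lemma1}(i) rules out. The restrictions on $\pmb{k}$ enter only through the $m=1$ base case and through the hypothesis $\pmb{k}\neq\F_2$ (when $n=2$) needed for Lemma \ref{lemma1}; no further constraints are required for the inductive step.
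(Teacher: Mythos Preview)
Your proof is correct and follows essentially the same approach as the paper: the first part is the root-character analysis already carried out before the theorem, and the second is induction on $m$ via inflation--restriction, reducing to $\Hom_{\F_p[SL_n(\pmb{k})]}(\M_0(\pmb{k}),\pmb{k})=(0)$, which follows from Lemma~\ref{lemma1}. The paper dispatches that last Hom computation in one line (``a dimension count using Lemma~\ref{lemma1}''), whereas you spell out the $p\mid n$ case via the maximal trivial quotient; your parenthetical alternative using Lemma~\ref{lemma1}(ii) is cleaner. The mid-paragraph detour about the transgression map is unnecessary and slightly muddled---once you show $H^1(\Gamma,\pmb{k})^{SL_n(W_m)}=0$ the exact sequence finishes the job immediately, with no need to discuss whether transgression is injective---but this is expository, not a gap.
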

The second part is proved by induction using inflation-restriction and the vanishing of $H^1(SL_n(\pmb{k}),\pmb{k})$ from the first part. With $\Gamma=\ker(SL_n(W_{m+1})\to SL_n(W_m))$ we have
\[ 0\to H^1(SL_n(W_m),\pmb{k})\to H^1(SL_n(W_{m+1}),\pmb{k})\to H^1(\Gamma,\pmb{k})^{SL_n(W_m)}.\]
Now the natural identification of $\M_0(\pmb{k})$ with $\Gamma$ compatible with $SL_n(W_m)$-actions  sets up an isomorphism between
$H^1(\Gamma,\pmb{k})^{SL_n(W_m)}$ and $\Hom_{\F_p[SL_n(\pmb{k})]}(\M_0(\pmb{k}),\pmb{k})$.
The latter vector space is easily seen to be $(0)$ by a dimension count using Lemma \ref{lemma1}, and the theorem follows.

\subsection{Determination of $H^1(SL_n(W_m),\M_0(\pmb{k}))$} \label{section H^1(M_0(k)}
The result here is that all  cohomology classes come from $H^1(SL_n(\pmb{k}),\M_0(\pmb{k}))$. More precisely:
\begin{proposition}\label{coh sln}
Suppose that $\pmb{k}$ has cardinality at least $4$ and that  $\pmb{k}\neq \F_4$ when $n=3$. The inflation map
$H^1(SL_n(W_{m}),\M_0(\pmb{k})) \to H^1(SL_n(W_{m+1}),\M_0(\pmb{k}))$
is then an isomorphism for all integers $m\geq  1$.
\end{proposition}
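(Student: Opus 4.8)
The plan is to argue by induction on $m$ using the inflation--restriction exact sequence for the normal subgroup $\Gamma = \ker(SL_n(W_{m+1})\to SL_n(W_m))$, which is identified with $\M_0(\pmb{k})$ compatibly with the $SL_n(W_m)$-action. The relevant five-term exact sequence reads
\[
0\to H^1(SL_n(W_m),\M_0(\pmb{k}))\to H^1(SL_n(W_{m+1}),\M_0(\pmb{k}))\to H^1(\Gamma,\M_0(\pmb{k}))^{SL_n(W_m)}\xrightarrow{\ \delta\ } H^2(SL_n(W_m),\M_0(\pmb{k})),
\]
so it suffices to show that the transgression $\delta$ is injective, i.e. that the image of the restriction map is zero. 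First I would compute the middle term: since $\Gamma$ acts trivially on $\M_0(\pmb{k})$ (the action factors through $SL_n(W_m)$, indeed through $SL_n(\pmb{k})$), we have $H^1(\Gamma,\M_0(\pmb{k}))^{SL_n(W_m)} = \Hom_{\F_p[SL_n(\pmb{k})]}(\Gamma,\M_0(\pmb{k})) = \Hom_{\F_p[SL_n(\pmb{k})]}(\M_0(\pmb{k}),\M_0(\pmb{k}))$, and by Lemma \ref{lemma1}(ii) this is the one-dimensional space of scalar multiplications $A\mapsto \lambda A$, $\lambda\in\pmb{k}$.

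So the crux is to show that the nonzero homomorphism $\phi_\lambda:\Gamma\to\M_0(\pmb{k})$, $\phi_\lambda(g)=\lambda\cdot(\text{``log''})$, is not in the image of restriction, equivalently that $\delta(\phi_\lambda)\neq 0$ in $H^2(SL_n(W_m),\M_0(\pmb{k}))$ for $\lambda\neq 0$. Here I would invoke Proposition \ref{image of transgression}: the extension
\[
0\to \Gamma\to SL_n(W_{m+1})\to SL_n(W_m)\to e
\]
is classified by some $2$-cocycle $x:SL_n(W_m)\times SL_n(W_m)\to \M_0(\pmb{k})\cong\Gamma$, and transgression of the ``identity-type'' map sends it to the class $[x]$; more generally $\delta(\phi_\lambda) = \lambda\cdot[x]$ since transgression is $\Hom$-linear in the coefficient map (i.e. compatible with the map on $H^2$ induced by $\phi_\lambda:\Gamma\to\M_0(\pmb{k})$ viewed as a coefficient homomorphism). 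Therefore $\delta$ is injective if and only if the extension class $[x]\in H^2(SL_n(W_m),\M_0(\pmb{k}))$ has the property that $\lambda[x]\neq 0$ for all $\lambda\neq 0$ — in particular it suffices that $[x]\neq 0$, i.e. that the reduction $SL_n(W_{m+1})\to SL_n(W_m)$ is a \emph{non-split} extension.

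The main obstacle, then, is proving non-splitness of $SL_n(W_{m+1})\to SL_n(W_m)$ with kernel $\M_0(\pmb{k})$ (and, when $p\mid n$, ruling out that $\lambda[x]$ could vanish for some nonzero $\lambda$ even if $[x]\neq 0$ — but since scalar multiplication by $\lambda\neq 0$ is an automorphism of $H^2$ when... actually one must be careful, as $H^2(SL_n(W_m),\M_0(\pmb{k}))$ is an $\F_p$-vector space and multiplication by $\lambda\in\pmb{k}^\times$ is an $\F_p$-linear automorphism, so $\lambda[x]\neq 0\Leftrightarrow[x]\neq 0$ — this point is clean). To prove non-splitness I would argue concretely: a splitting would give a subgroup of $SL_n(W_{m+1})$ mapping isomorphically to $SL_n(W_m)$; by restricting to the elementary matrices $I+e_{ij}$ (for $i\neq j$) and computing commutators in $W_{m+1}$, one finds the relations forced on any would-be set-theoretic section cannot be made multiplicative — e.g. the Steinberg-type relation $[I+e_{ij},I+e_{jk}]=I+e_{ik}$ lifts with a $p^m$-discrepancy that cannot be absorbed. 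Alternatively, and perhaps more cleanly, I would compare with the abelianization or use that $SL_n(W_{m+1})$ is generated by elementary matrices while a split extension $\M_0(\pmb{k})\rtimes SL_n(W_m)$ would have $\M_0(\pmb{k})$ as a direct factor detectable on a suitable subquotient. This is exactly the kind of computation the section is building toward, and I expect it to be the technical heart; once non-splitness is in hand, the inflation map is an isomorphism by the exactness of the five-term sequence, and induction on $m$ starting from $m\geq 1$ completes the proof. The hypotheses on $\pmb{k}$ (cardinality $\geq 4$, and $\pmb{k}\neq\F_4$ when $n=3$) enter only through Lemma \ref{lemma1} and will be needed to guarantee $\Hom_{\F_p[SL_n(\pmb{k})]}(\M_0(\pmb{k}),\M_0(\pmb{k}))$ is exactly the scalars.
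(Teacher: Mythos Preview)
Your approach is essentially identical to the paper's: reduce via the five-term inflation--restriction sequence to injectivity of the transgression $\delta$, identify its domain with $\pmb{k}$ via Lemma~\ref{lemma1}(ii), and then use Proposition~\ref{image of transgression} to reduce to non-splitness of $SL_n(W_{m+1})\to SL_n(W_m)$, which the paper isolates and proves as Proposition~\ref{no section}. One small correction: the hypotheses on $\pmb{k}$ do not enter only through Lemma~\ref{lemma1}---they are needed again, and more delicately, in the non-splitness proof, where the cases $p=2,3$ with $m=1$ require the cardinality bound $|\pmb{k}|\geq 4$ and the exclusion $\pmb{k}\neq\F_4$ when $n=3$.
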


By the inflation--restriction exact sequence, the above proposition follows if we can
show that the transgression map
\[ \delta:H^1(\Gamma,\M_0(\pmb{k}))^{SL_n(W_{m})}\to
 H^2(SL_n(W_{m}),\M_0(\pmb{k}))\]
 is injective. Since  $H^1(\Gamma,\M_0(\pmb{k}))^{SL_n(W_{m})}$ has dimension $1$ as a $\pmb{k}$-vector space by Lemma \ref{lemma1}, we just need to check that $\delta$ is not the zero map.

Recall that we have a natural identification of $\Gamma$ with $\M_0(\pmb{k})$ given by $\phi(I+p^mA):=A\mod{p}$.
Hence by Proposition \ref{image of transgression}, we see that $\delta(-\phi)$  must be the class of the extension
\[I\to\Gamma\to SL_n(W_{m+1})\to SL_n(W_m)\to I.\]
  Therefore the required conclusion follows if
 the above extension is non-split, and we address this below.

\begin{proposition} \label{no section}
Assume that $\pmb{k}$ has cardinality at least $4$ and that if $n=3$ then $\pmb{k}\neq \F_4$. Then the extension
  \begin{equation}\label{gamma}
I\xrightarrow{}\Gamma\xrightarrow{} SL_n(W_{m+1})\xrightarrow{} SL_n(W_m)
\xrightarrow{}I\end{equation}
does not split for any integer $m\geq  1$.
\end{proposition}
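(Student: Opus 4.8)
The plan is to show that the extension \ref{gamma} does not split by exhibiting an explicit element of $SL_n(W_m)$ whose lifts to $SL_n(W_{m+1})$ never have the ``right'' order, thereby obstructing any group-theoretic section. Concretely, I would first reduce to a computation inside a small subgroup: since a splitting $s: SL_n(W_m)\to SL_n(W_{m+1})$ would restrict to a splitting over any subgroup, it suffices to find one subgroup over which no section exists. A natural candidate is a unipotent one-parameter subgroup, or a copy of a cyclic group generated by a suitable unipotent or torsion element. For $g = I + e_{ij}$ with $i\neq j$ (reduced mod $p^m$), $g$ has order $p^{\lceil \log_p p^m\rceil}=p^m$ in $SL_n(W_m)$ for the additive reason that $(I+e_{ij})^N = I + Ne_{ij}$, so $g^N = I$ in $SL_n(W_m)$ iff $p^m \mid N$. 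Any lift $\tilde g \in SL_n(W_{m+1})$ of $g$ has the form $I + e_{ij} + p^m M$ for some $M\in\M_0(W)$ read mod $p$; then $\tilde g^{p^m} = I + p^m(e_{ij}$-contributions$)$, and the point is to compute this power and show it is forced to be a nontrivial element of $\Gamma$ regardless of $M$. If so, $g$ admits no lift of order dividing $p^m$, hence no section of \ref{gamma} exists even over $\langle g\rangle$.

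The key computational step is therefore: expand $(I + e_{ij} + p^m M)^{p^m}$ in $W_{m+1} = W/p^{m+1}$. Since $p^m M \cdot p^m M \equiv 0$, and $p^m M$ is central-ish modulo $p^{m+1}$ in the relevant sense (it commutes with everything up to terms killed by $p^{m+1}$ only when multiplied by another $p^m$-term), the binomial-type expansion collapses: one gets $(I+e_{ij})^{p^m} + p^m \sum_{a+b = p^m-1} (I+e_{ij})^a M (I+e_{ij})^b \pmod{p^{m+1}}$, and $(I+e_{ij})^{p^m} = I + p^m e_{ij}$ is \emph{not} the identity in $W_{m+1}$ — it equals $I + p^m e_{ij}$, a nontrivial element of $\Gamma$. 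The sum $p^m\sum_a (I+e_{ij})^a M (I+e_{ij})^b$ lies in $p^m \M_0(\pmb k)$, so the question becomes whether the coset $e_{ij} + (\text{that sum mod } p)$ inside $\M_0(\pmb k) \cong \Gamma$ can be made zero by choosing $M$. Because $e_{ij}$ is nilpotent, $(I+e_{ij})^a = I + ae_{ij}$, so the sum telescopes to an expression of the form $p^m(\,p^m M + (\text{commutator-type terms in } e_{ij} \text{ and } M)\,)$, and one checks that modulo $p$ this sum lies in the image of the map $M \mapsto [e_{ij}$-action$]$, i.e. in a proper subspace not containing $e_{ij}$; the hypothesis $|\pmb k|\geq 4$ (and $\pmb k\neq \F_4$ if $n=3$) enters to rule out degenerate coincidences, presumably because for small fields $e_{ij}$ might accidentally lie in that subspace. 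I expect the cleanest formulation avoids $M$ entirely by a counting/cohomological argument: a section would give $H^1(\Gamma,\M_0(\pmb k))^{SL_n(W_m)} \to H^2$ zero, but one can instead directly compare the order structure, and the hard part will be handling the case $p \mid n$ where $\Gamma \cong \M_0(\pmb k)$ is not semisimple, so that the relevant ``obstruction lives in $\s$ versus $\V$'' dichotomy must be tracked carefully.

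An alternative, possibly more robust route: use that a section would make the reduction $SL_n(W_{m+1}) \to SL_n(W_m)$ split, hence $SL_n(W_{m+1}) \cong \Gamma \rtimes SL_n(W_m)$, which would force every finite subgroup of $SL_n(W_m)$ of order prime to $p$ to lift to a subgroup of $SL_n(W_{m+1})$ of the same order — fine — but more usefully it would force the $p$-torsion structure to behave multiplicatively. Take $g = I + e_{ij}$ as above: in $\Gamma \rtimes SL_n(W_m)$ the element $(0,g)$ has order exactly $p^m$, yet in $SL_n(W_{m+1})$ no preimage of $g$ has order $p^m$ (by the power computation), a contradiction. I would write the proof this way, doing the binomial expansion once for the single element $I+e_{ij}$ (where all higher powers of $e_{ij}$ vanish when $n\geq 2$ and we pick $i\neq j$ with $|i-j|$ not forcing extra nilpotency — any off-diagonal unit works since $e_{ij}^2 = 0$), and then invoking Proposition \ref{image of transgression} to conclude: $\delta(-\phi)$ is exactly the class of \ref{gamma}, and we have shown this class is nonzero, hence (given $\dim_{\pmb k} H^1(\Gamma,\M_0(\pmb k))^{SL_n(W_m)} = 1$ from Lemma \ref{lemma1}) the transgression $\delta$ is injective, as needed for Proposition \ref{coh sln}.

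The main obstacle I anticipate is making the single power computation $(I + e_{ij} + p^m M)^{p^m} \pmod{p^{m+1}}$ genuinely independent of $M$ and of the ambient field size, and in particular isolating precisely where the excluded cases ($|\pmb k| < 4$, and $\pmb k = \F_4$ with $n = 3$) break the argument — these exclusions strongly suggest that for those fields $SL_n(\pmb k)$ is too small (or has exceptional behavior, e.g. $SL_3(\F_4)$ has an exotic cover or $SL_2(\F_2), SL_2(\F_3)$ are solvable) so that a section can actually exist, and the proof must be phrased to use $|\pmb k|\geq 4$ exactly at the step where one needs $\M_0(\pmb k)/\s$ to be a nontrivial simple module with no $SL_n(\pmb k)$-fixed vectors outside the span of $e_{1n}$, per Lemma \ref{lemma1}. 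I would structure the write-up so that this single inequality is invoked transparently at that one point.
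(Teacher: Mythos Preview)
Your strategy—obstruct a section by showing that the unipotent element $g=I+e_{ij}$ has no lift in $SL_n(W_{m+1})$ of order $p^m$—is exactly the paper's opening move, and your binomial expansion is essentially the paper's formula \ref{formula 1}. For $p\geq 5$, or for $p\in\{2,3\}$ with $m\geq 2$, the coefficients $\alpha=p^m(p^m-1)/2$ and $\beta=p^m(p^m-1)(2p^m-1)/6$ are divisible by $p$, so the correction terms vanish and $\tilde g^{p^m}=I+p^me_{ij}\neq I$ for \emph{every} lift $\tilde g$; your argument is complete in those cases.

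The genuine gap is at $p\in\{2,3\}$, $m=1$. Here your key assertion—that the correction sum lies in a proper subspace of $\M_0(\pmb{k})$ not containing $e_{ij}$—is simply false. For $p=3$, $m=1$ the surviving term is $NAN=a_{21}e_{12}$, which is a multiple of $e_{12}$; taking $A=-e_{21}$ gives a lift of $I+e_{12}$ of order~$3$. For $p=2$, $m=1$, $n\geq 3$, one can take $A=e_{11}+e_{33}$ (trace zero in characteristic~$2$) and again obtain a lift of order~$2$. So the single-element order obstruction cannot work, and indeed cannot distinguish $\pmb{k}$ from its prime field—yet the proposition is known to fail for $\pmb{k}=\F_2,\F_3$.

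The paper repairs this by using the putative section $\theta$ on more than one element. For $p=3$, $m=1$ it applies $\theta$ to the whole root group $\{I+N(x):x\in\pmb{k}\}$: the homomorphism property forces $x\mapsto a_{21}(x)$ to be additive, while the order constraint forces $a_{21}(x)=-x^{-1}$, and these are incompatible once $|\pmb{k}|\geq 4$. For $p=2$, $m=1$ one must go further and bring in the diagonal torus $\T$: after conjugating so that $\theta|_{\T}$ is the standard lift, the $\T$-equivariance of $\theta$ pins down enough entries of $A(x)$ to run a similar additivity contradiction. It is precisely in this torus step that the exclusion $\pmb{k}\neq\F_4$ when $n=3$ appears (the torus is too small otherwise), not—as you guessed—through any statement about $e_{ij}$ lying in a particular subspace.
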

\begin{proof} This should be well known, but it is hard to find a reference in the form we need. We therefore sketch a proof for completeness. The case when $n=2$ and $p\geq 5$ is discussed  in
\cite{serre}.  For the non-splitting of the above sequence when $\pmb{k}=\F_p$ see
\cite{sah1}; for non-splitting in the $GL_n$ case see \cite{sah2}.

 If  $R$ is a commutative ring and $r\in R$ then we write $N(r)$ for  the elementary nilpotent $n\times n$ matrix in $\M(R)$ with zeroes in all places except at the $(1,2)$-th entry where it is $r$.
 Note that
 $N(r)^2=0$ and that
 \[
 (I+N(r))^k= I+kN(r)=I+krN(1)\]
 for every integer $k$.

Suppose there is a homomorphism $\theta: SL_n(W_m)\to SL_n(W_{m+1})$ which splits the
above exact sequence \ref{gamma}. We fix a section $s:W_m\to W_{m+1}$ that sends Teichm\"{u}ller lifts to Teichm\"{u}ller lifts. For instance, if we think in terms of Witt vectors of finite length then
we can take $s$ to be the map $(a_1,\ldots, a_m)\to (a_1,\ldots ,a_m,0)$. Finally, take
the map $A: W_m\to \M_0(\pmb{k})$   so that  the relation
\[
\theta(I+N(x))=(I+p^mA(x))(I+N(s(x)))\]
holds for all $x\in W_m$ (and we have abused notation and identified $p^mW_{m+1}$ with $p^m\pmb{k}$).

Now $\theta(I+N(x))$ has order dividing $p^m$ in $SL_n(W/p^{m+1})$ for any $x\in W_m$.
Writing $N$ and $A$ in lieu of $N(s(x))$ and $A(x)$, we have
\[
(I+N)^k(I+p^mA)(I+N)^{-k}=I+p^m(A+kNA-kAN-k^2NAN)
\]
for any integer $k$, and a small calculation yields
\begin{equation}\label{formula 1}
\theta(I+N(x))^{p^m}
=\big(I+\alpha p^m(NA-AN)-\beta p^mNAN\big)(I+p^mN).
\end{equation} where
$\alpha=p^m(p^m-1)/2$ and
$\beta=p^m(p^m-1)(2p^m-1)/6$.
Hence if either $p\geq 5$, or $p$ divides $6$ and $m\geq 2$, then
$\theta(I+N(1))$ cannot have order $p^m$---a contradiction.

From here on $p$ divides $6$ and $m=1$; so $\theta: SL_n(\pmb{k})\to SL_n(W/p^2)$ and
$s(x)=\hat{x}$.  Applying $\theta$ to $(I+N(x))(I+N(y))=I+N(x+y)$ and multiplying by $N(1)$ on the left and right then
gives $N(1)A(x)N(1)+N(1)A(y)N(1)=N(1)A(x+y)N(1)$, and therefore
\[
a_{21}(x+y)=a_{21}(x)+a_{21}(y)\]
 for all $x,y\in \pmb{k}$.

Suppose now $p=3$. The expression \ref{formula 1} for $\theta(I+N(x))^p$ then becomes
\[ I+pxN(1)+px^2N(1)A(x)N(1)=I.\] Comparing the $(1,2)$-th entries on both sides we get $x^2a_{21}(x)+x=0$ for all $x\in \pmb{k}$. Thus for $x\neq 0$ we have
$a_{21}(x)=-x^{-1}$. This contradicts the linearity of $a_{21}$ if $\pmb{k}\neq \F_3$.

Before we  consider the case $p=2$ specifically, we make some relevant simplifications
by considering  the action of $\T$, the  subgroup of diagonal
matrices in $SL_n(\pmb{k})$. For $t=(t_1,\ldots , t_n)\in SL_n(\pmb{k})$ we define
$\hat{t}:=(\widehat{t_1},\ldots ,\widehat{t_n})\in SL_n(W/p^2)$. We must then have
 $\theta(t)=B(t)\hat{t}$ where $B: \T\to \Gamma$ is a $1$-cocycle. Since $H^1(\T,\Gamma)=0$ we can assume, after conjugation by a matrix in $\Gamma$ if necessary,
 that $\theta(t)=\hat{t}$. The homomorphism condition applied to
 $\theta(t(I+N(x))t^{-1})$ then gives
 \[
 (I+pA(t_1x/t_2))(I+\widehat{(t_1x/t_2)}N(1))=(I+ptA(x)t^{-1})
 (I+\hat{t}\hat{x}N(1)\hat{t}^{-1})
 \]
where $t=(t_1,\ldots, t_n)$. Hence $A(t_1x/t_2)=tA(x)t^{-1}$ for all $t\in T$ and $x\in \pmb{k}$. By considering
specialisations $t_1=t_2=1$ for $n\geq 4$ and $t=(\lambda, \lambda, \lambda^{-2})$ when
$n=3$, we conclude that $a_{ij}(x)=0$ if $i\neq j$ and $i\geq 3$ or $j\geq 3$ provided
$\pmb{k}$ has cardinality at least $4$ and $\pmb{k}\neq \F_4$ when $n=3$.

We now go back to assuming $p=2$ and $m=1$. Relation \ref{formula 1} then becomes
\[
I+px(N(1)A(x)+A(x)N(1))+px^2N(1)A(x)N(1)+pxN(1)=I,\]
and we get $a_{21}(x)=0$ and $a_{11}(x)+a_{22}(x)=1$ whenever $x\neq 0$. Hence if
$\pmb{k}$ has cardinality at least $4$ and $\pmb{k}\neq \F_4$ when $n=3$, then
$\theta(I+N(x))$ is an upper-triangular matrix and so $a_{ii}(x+y)=a_{ii}(x)+a_{ii}(y)$
for $i=1,\ldots ,n$ and $x,y\in \pmb{k}$. Since $\pmb{k}$ has at least $4$ elements we
can choose $x,y\in \pmb{k}$ with $xy(x+y)\neq 0$, and this gives
\[
1=a_{11}(x+y)+a_{22}(x+y)=(a_{11}(x)+a_{11}(y))+(a_{22}(x)+a_{22}(y))=1+1\]
---a contradiction.
\end{proof}

\subsection{$H^1$ when $n$ and $p$ are not coprime} \label{section H^1(V)}

Suppose now that $p$ divides $n$. Thus $\M_0(\pmb{k})$ is reducible and we have the
exact sequence
 \begin{equation} \label{exact sequence p|n}
0\to \s\xrightarrow{i} \M_0(\pmb{k})\xrightarrow{\pi} \V\to 0.
\end{equation}
We then have the following analogue of Proposition \ref{coh sln}.
\begin{proposition} \label{coh sln p|n}
Assume that $p$ divides $n$ and that the cardinality of $\pmb{k}$ is at least $4$.
The inflation map
$H^1(SL_n(W_{m}),\V)\to H^1(SL_n(W_{m+1}),\V)$
is then an isomorphism for all integers $m\geq  1$.
\end{proposition}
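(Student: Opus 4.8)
The plan is to mimic the proof of Proposition \ref{coh sln}: by the inflation--restriction exact sequence attached to $\Gamma = \ker(SL_n(W_{m+1})\to SL_n(W_m))$, it suffices to show that the transgression map
\[
\delta : H^1(\Gamma,\V)^{SL_n(W_m)} \to H^2(SL_n(W_m),\V)
\]
is injective. By the natural $SL_n(W_m)$-equivariant identification of $\Gamma$ with $\M_0(\pmb{k})$, we have $H^1(\Gamma,\V)^{SL_n(W_m)} = \Hom_{\F_p[SL_n(\pmb{k})]}(\M_0(\pmb{k}),\V)$, which is $1$-dimensional over $\pmb{k}$ by Lemma \ref{lemma1}(iii); more precisely, every such homomorphism factors as $\M_0(\pmb{k}) \xrightarrow{\pi} \V \xrightarrow{\lambda} \V$. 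So, as in the previous subsection, I only need to check that $\delta$ is not the zero map, i.e.\ that $\delta(\pi\circ\phi)\neq 0$ where $\phi$ is the identification $\Gamma \xrightarrow{\sim}\M_0(\pmb{k})$.

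The key step is to identify $\delta(\pi\circ(-\phi))$ with the class in $H^2(SL_n(W_m),\V)$ of the pushout of the extension \ref{gamma} along $\pi\colon \Gamma\cong\M_0(\pmb{k})\to\V$, using Proposition \ref{image of transgression} exactly as in the proof of Proposition \ref{coh sln}. Concretely this is the class of
\[
I \to \V \to SL_n(W_{m+1})/i(\s) \to SL_n(W_m) \to I,
\]
and I must show this does not split. If it did split, pulling back the splitting would produce, for $SL_n(W_{m+1})$, a set-theoretic lift of $SL_n(W_m)$ landing inside a subgroup that maps isomorphically to $SL_n(W_m)$ modulo $\s$ --- equivalently, a homomorphism $SL_n(W_m)\to SL_n(W_{m+1})/i(\s)$ splitting the sequence above. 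I would rerun the element-level computation from the proof of Proposition \ref{no section}: set $N(r)$ the elementary matrix with $r$ at the $(1,2)$ entry (so $N(r)\in\M_0$, hence its image survives in $SL_n(W_{m+1})/i(\s)$), write a hypothetical splitting as $I+N(x)\mapsto (I+p^m A(x))(I+N(s(x)))$ with $A(x)\in\V=\M_0(\pmb{k})/\s$, and exploit that $\theta(I+N(x))$ has order dividing $p^m$. Formula \ref{formula 1} (which involves only the matrices $N$, $NA$, $AN$, $NAN$, all of which make sense modulo $\s$) goes through verbatim, killing the cases $p\geq 5$ and $p\mid 6$, $m\geq 2$. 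Since $p\mid n$ here, we have $p\in\{2,3\}$; and the remaining analysis for $p=2,3$, $m=1$ --- reducing to the linearity of $a_{21}$ and to a numerical contradiction using $|\pmb{k}|\geq 4$ --- carries over after reducing modulo $\s$, because the entries $a_{21}$, $a_{11}+a_{22}$ that were used are $\s$-invariant functionals (adding a scalar matrix changes $a_{11}$ and $a_{22}$ equally and leaves $a_{21}$ fixed, so $a_{21}$ and $a_{11}-a_{22}$, hence the quantities actually used, descend to $\V$). Note the hypotheses here are cleaner: since $p\mid n$ forces $n\neq 3$ unless $p=3$, and one still excludes $\pmb{k}=\F_4$ exactly when $n=3$; and the $\F_9$, $n=2$ obstruction from Theorem \ref{trivial H1 H2} does not arise because working modulo $\s$ removes the scalar-character contribution.

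The main obstacle I anticipate is \emph{not} the non-splitting computation --- that is essentially Proposition \ref{no section} pushed forward along $\pi$ --- but rather making sure the pushout identification is set up correctly and that the relevant component of $A(x)$ survives modulo $\s$. The subtle point is that the matrix identity \ref{formula 1} was derived for $A\in\M_0(\pmb{k})$, and I must check that every term appearing there ($NA-AN$ and $NAN$) depends only on $A\bmod\s$; this holds because $N$ commutes with scalars, so $N(A+\mu I)-(A+\mu I)N = NA-AN$ and $N(A+\mu I)N = NAN + \mu N^2 = NAN$. With that observed, the argument of Proposition \ref{no section} transfers with no essential change, and the proposition follows.
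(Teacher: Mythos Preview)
Your overall plan is the paper's: reduce to injectivity of the transgression $\delta:H^1(\Gamma,\V)^{SL_n(W_m)}\to H^2(SL_n(W_m),\V)$, identify a generator's image with the class of the extension $I\to\Gamma/Z\to SL_n(W_{m+1})/Z\to SL_n(W_m)\to I$, and show the latter is non-split. Your observation that $NA-AN$ and $NAN$ are unchanged under $A\mapsto A+\mu I$ is correct and makes formula~\eqref{formula 1} descend; this disposes of $p\ge 5$ and of $m\ge 2$, exactly as in Lemma~\ref{no section p|n}.

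The gap is at $m=1$, $p=2$, $n=2$. In Proposition~\ref{no section} the relation $\theta(I+N(x))^2=I$ forces the matrix $x(N(1)A+AN(1))+x^2N(1)AN(1)+xN(1)$ to be \emph{zero}, and from its diagonal one reads $a_{21}(x)=0$. In your quotient setting the relation is only that this matrix be a \emph{scalar}. For $n\ge 4$ its diagonal is $(xa_{21},xa_{21},0,\dots,0)$, so ``scalar'' still gives $a_{21}=0$ and your argument proceeds. But for $n=2$ the diagonal is already constant $(xa_{21},xa_{21})$; the only new constraint comes from the $(1,2)$-entry, and together with $a_{11}+a_{22}=0$ (trace zero) it yields $a_{21}(x)=x^{-1}$. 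On $\pmb{k}=\F_4$ one has $x^{-1}=x^2$, the Frobenius, which \emph{is} $\F_2$-linear---so there is no contradiction with the additivity of $a_{21}$, and the element-level argument does not close out $n=2$, $\pmb{k}=\F_4$.

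The paper does not attempt to push the matrix computation through for $m=1$. Instead it uses the commutative square~\eqref{cd1}: the left $\pi^*$ is an isomorphism (Lemma~\ref{lemma1}), the top transgression is injective by Proposition~\ref{coh sln}, and the right $\pi^*$ is injective because $H^2(SL_n(\pmb{k}),\s)=0$ by Theorem~\ref{trivial H1 H2} (all of whose excluded fields have characteristic different from $p$ once $p\mid n$, so the exclusions are vacuous here). This forces the bottom transgression to be injective. You should invoke this diagram for $m=1$, or give a separate argument covering $SL_2(\F_4)$.
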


Denote by $Z$ the subgroup of $\Gamma$ consisting of the scalar matrices
 $(1+p^m\lambda)I$. We then have an exact sequence
 \begin{equation}\label{gamma/Z}
 I\xrightarrow{}\Gamma/Z\xrightarrow{} SL_n(W_{m+1})/Z\xrightarrow{\mod{p^m}} SL_n(W_m)
\xrightarrow{}I.
\end{equation} Under the natural identification $\phi :\Gamma\to \M_0(\pmb{k})$  given by  $\phi(I+p^mA):=A\mod{p}$
 of  $\Gamma$ with $\M_0(\pmb{k})$, the groups $Z$, resp. $\Gamma/Z$, correspond to
 $\s$, resp. $\V$.
If we set $\psi:\Gamma/Z\to \V$ to be the map
 induced by $\phi\mod{\s}$, then Proposition
\ref{image of transgression} shows that  $\delta(-\psi)$ is the cohomology class of the extension  \ref{gamma/Z}  under the transgression map
 \[
 \delta : H^1(\Gamma/Z,\V)^{SL_n(W_m)}\to H^2(SL_n(W_m),\V).\]
Now, by Lemma \ref{lemma1}, the map \[ H^1(\Gamma/Z,\V)^{SL_n(W_{m})}\to H^1(\Gamma,\V)^{SL_n(W_{m})}\]
is an isomorphism of $1$-dimensional $\pmb{k}$-vector spaces. Thus the conclusion of
Proposition \ref{coh sln p|n} holds exactly when the extension \ref{gamma/Z} is
 non-split.

 In many cases the required non-splitting follows from a simple modification of the proof of Proposition \ref{no section}. More  precisely, we have the following:
\begin{lemma}\label{no section p|n}
Suppose $p|n$, and  assume that either $p\geq 5$ or $m\geq 2$. Then the extension
\[
 I\to\Gamma/Z\xrightarrow{} SL_n(W_{m+1})/Z\to SL_n(W_m)
\to I\]
does not split.
\end{lemma}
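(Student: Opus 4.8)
The plan is to mimic the proof of Proposition~\ref{no section} but carry everything out in $SL_n(W_{m+1})/Z$, where $Z$ is the central subgroup of scalar matrices $(1+p^m\lambda)I$. Suppose for contradiction that $\bar\theta:SL_n(W_m)\to SL_n(W_{m+1})/Z$ splits the extension. As before, I would work with the element $I+N(x)$ for $x\in W_m$ (where $N(r)$ is the elementary matrix with $r$ in the $(1,2)$ slot, so $N(r)^2=0$), fix the section $s:W_m\to W_{m+1}$ sending Teichm\"uller lifts to Teichm\"uller lifts, and write $\bar\theta(I+N(x))=\overline{(I+p^mA(x))(I+N(s(x)))}$ for a map $A:W_m\to\M_0(\pmb{k})$, now only well-defined modulo $\s$. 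The key point is that $\bar\theta(I+N(x))$ has order dividing $p^m$ in $SL_n(W_{m+1})/Z$, so formula~\ref{formula 1} from the previous proof gives
\[
\big(I+\alpha p^m(NA-AN)-\beta p^mNAN\big)(I+p^mN)\in Z,
\]
with $\alpha=p^m(p^m-1)/2$, $\beta=p^m(p^m-1)(2p^m-1)/6$, and $N=N(s(x))$, $A=A(x)$.

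Under the hypothesis $p\geq 5$, or $p\mid 6$ with $m\geq 2$, the scalar $\alpha$ is a unit times $p^m$ vanishing appropriately and the analysis is simplest: the left-hand side has the form $I+p^m(\text{something})$ and lying in $Z$ forces that something to be scalar. But $NA-AN$, $NAN$ and $N$ all have zero diagonal (indeed $N$ and $NAN$ are strictly upper triangular and supported in row~$1$), so the scalar part is $0$; hence $p^mN=0$ in $\M_0(\pmb{k})$, i.e.\ $N(1)=0$, which is absurd. More carefully, since $\alpha\equiv 0$ and $\beta\equiv 0 \pmod{p^m}$ precisely in the stated range (this is exactly the numerical input used in Proposition~\ref{no section}: $p\geq 5$ makes $6$ invertible mod $p$, while $m\geq 2$ makes $p^m(p^m-1)/2$ and the analogous cubic expression divisible by $p^m$ for $p\in\{2,3\}$), formula~\ref{formula 1} collapses to $I+p^mN(1)\in Z$, forcing $N(1)\equiv 0$ mod $\s$, a contradiction since $e_{12}\notin\s$.

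The one subtlety to check is that passing to the quotient by $Z$ does not destroy the vanishing of the relevant terms: concretely, that $I+p^m N(1)$ represents a nontrivial element of $\Gamma/Z$, equivalently that $N(1)=e_{12}$ is not scalar, which is immediate for $n\geq 2$. I also need that the cocycle/section bookkeeping still makes sense after quotienting, but this is automatic because $Z$ is central and contained in $\Gamma$, so $\Gamma/Z\hookrightarrow SL_n(W_{m+1})/Z$ is still the kernel of reduction mod $p^m$, and the identification with $\V$ is the one already set up before the lemma. I expect the main (very minor) obstacle to be purely notational: keeping track of which identities now hold only modulo $\s$ rather than on the nose. No genuinely new idea is needed beyond observing that the obstruction element $I+p^mN(1)$ already fails to be scalar, so the argument of Proposition~\ref{no section} goes through in the quotient whenever the coefficients $\alpha,\beta$ die, which is exactly the range $p\geq 5$ or $m\geq 2$.
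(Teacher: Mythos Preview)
Your approach is correct and is exactly the paper's: lift the section to $SL_n(W_{m+1})/Z$, use centrality of $Z$ so that formula~\ref{formula 1} still holds modulo $Z$, and observe that in the range $p\geq 5$ or $m\geq 2$ the integers $\alpha,\beta$ are divisible by $p$, collapsing the identity to $I+p^mN(1)\in Z$, which is impossible since $e_{12}\notin\s$. One small slip: $NA-AN$ does \emph{not} have zero diagonal in general (its $(1,1)$ and $(2,2)$ entries are $\pm s(x)a_{21}$), but this is harmless because your ``more carefully'' paragraph bypasses that claim entirely by killing the $\alpha$ and $\beta$ terms outright; also note that for $p\in\{2,3\}$ one only has $p\mid\alpha,\beta$ rather than $p^m\mid\alpha,\beta$, which is all that is needed.
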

\begin{proof} We give a sketch: Suppose $\theta : SL_n(W_m)\to SL_n(W_{m+1})/Z$ is a
section. Then, with $N(1)$  the  elementary nilpotent matrix described in the
proof of Proposition \ref{no section}, we have $\theta(I+N(1))=(I+p^mA)(I+N(1))$ modulo $Z$ for some
$A\in \M_0(\pmb{k})$. Because elements in $Z$ are central,
relation \ref{formula 1} holds modulo $Z$ and the lemma easily follows.\end{proof}

We now deal with the  case $m=1$ and complete the proof of
Proposition \ref{coh sln p|n}. Consider the commutative diagram
 \begin{equation}\label{cd1}\begin{CD}
H^1(\Gamma,\M_0(\pmb{k}))^{SL_n(W_{m})}
@>{\delta}>>  H^2(SL_n(W_{m}),\M_0(\pmb{k})) \\
@VV{\pi^*}V   @VV{\pi^*}V        \\
H^1(\Gamma,\V)^{SL_n(W_{m})}
@>{\delta}>>  H^2(SL_n(W_{m}),\V)
\end{CD}\end{equation} where $\pi^*$ is the map induced by the projection
$\pi:\M_0(k)\to \V$. Now, the map $\pi^\ast$ on  the left hand side of the square
is an isomorphism by Lemma \ref{lemma1}. Since the cardinality of $\pmb{k}$ is at least $4$ (and remembering that  we are also assuming $p|n$),  the top row of the square
\ref{cd1} is an injection by
Proposition \ref{coh sln}. Furthermore,
Theorem \ref{trivial H1 H2} implies $H^2(SL_n(W_m),\pmb{k})=(0)$ and therefore
the map $\pi^\ast$ on the right hand side of the square
is an injection. Hence the bottom row of the square \ref{cd1} is also an injection and we can conclude the proposition.

\begin{remark}
As we saw in course of the proof, Proposition \ref{coh sln p|n} implies the following
extension of Lemma \ref{no section p|n}:

\begin{corollary}\label{non-split p|n}
Assume that $p$ divides $n$ and  $\pmb{k}$ has cardinality at least $4$. Then the sequence
\[
 I\to \Gamma/Z\to SL_n(W_{m+1})/Z\to SL_n(W_m)
\to I
\end{equation*}
does not split for any integer $m\geq  1$.
\end{corollary}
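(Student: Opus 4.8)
The plan is to read the corollary off Proposition \ref{coh sln p|n} directly, using the same transgression mechanism that was set up to prove that proposition. The key observation is that the correspondence ``inflation is an isomorphism $\iff$ the extension \ref{gamma/Z} does not split'' is a genuine equivalence, valid in both directions; so once Proposition \ref{coh sln p|n} is known for every $m\geq 1$ (under $p\mid n$ and $\lvert\pmb{k}\rvert\geq 4$), the non-splitting of \ref{gamma/Z} for every $m\geq 1$ is automatic. This is precisely the force of the phrase ``holds exactly when'' in the discussion preceding that proof.

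Spelled out, I would proceed in three steps. First, apply the five-term inflation--restriction exact sequence to the extension \ref{gamma/Z} with coefficients in $\V$; here $\Gamma/Z$ acts trivially on $\V$ (the conjugation action of $\Gamma$ on $\M_0(\pmb{k})\cong\Gamma$ being trivial for $m\geq 1$), and $H^1(SL_n(W_{m+1})/Z,\V)\cong H^1(SL_n(W_{m+1}),\V)$ since $\V^{SL_n(\pmb{k})}=0$ by Lemma \ref{lemma1}. The sequence
\[
0\to H^1(SL_n(W_m),\V)\xrightarrow{\mathrm{inf}} H^1(SL_n(W_{m+1}),\V)\xrightarrow{\mathrm{res}} H^1(\Gamma/Z,\V)^{SL_n(W_m)}\xrightarrow{\delta} H^2(SL_n(W_m),\V)
\]
shows that $\mathrm{inf}$ is an isomorphism if and only if the transgression $\delta$ is injective. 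Second, by Lemma \ref{lemma1} the group $H^1(\Gamma/Z,\V)^{SL_n(W_m)}$ is a one-dimensional $\pmb{k}$-vector space, spanned by the natural identification $\psi\colon\Gamma/Z\xrightarrow{\sim}\V$ (equivalently by $-\psi$); hence $\delta$ is injective exactly when $\delta(-\psi)\neq 0$. Third, Proposition \ref{image of transgression} identifies $\delta(-\psi)$ with the class in $H^2(SL_n(W_m),\V)$ of the extension \ref{gamma/Z}, so $\delta(-\psi)\neq 0$ precisely when \ref{gamma/Z} is non-split.

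Chaining these equivalences, Proposition \ref{coh sln p|n} for a given $m$ is the same statement as the non-splitting of \ref{gamma/Z} for that $m$, and since Proposition \ref{coh sln p|n} has already been established for all $m\geq 1$, the corollary follows. I do not anticipate a real obstacle: the only points requiring care are that each link in the chain is a true equivalence --- in particular that the source of $\delta$ is genuinely one-dimensional, so that injectivity of $\delta$ reduces to non-vanishing on the single generator $-\psi$ --- and that the standing hypotheses $p\mid n$ and $\lvert\pmb{k}\rvert\geq 4$ are exactly those under which Lemma \ref{lemma1} and Proposition \ref{coh sln p|n} apply. If one preferred a self-contained argument instead, one could mimic Proposition \ref{no section}, using relation \ref{formula 1} modulo $Z$ to handle $p\geq 5$ or $m\geq 2$ (this is Lemma \ref{no section p|n}) and then pulling the remaining cases $p\in\{2,3\}$, $m=1$ down from the non-splitting of $\Gamma\to SL_n(W_{m+1})\to SL_n(W_m)$ via the diagram \ref{cd1}; but routing through Proposition \ref{coh sln p|n} is shorter.
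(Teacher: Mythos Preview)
Your proposal is correct and is essentially the paper's own argument: the remark containing this corollary simply points back to the equivalence established while proving Proposition~\ref{coh sln p|n} (``the conclusion of Proposition~\ref{coh sln p|n} holds exactly when the extension~\ref{gamma/Z} is non-split''), and you have reproduced that equivalence precisely via the five-term sequence, the one-dimensionality from Lemma~\ref{lemma1}, and Proposition~\ref{image of transgression}. The only cosmetic difference is that you run inflation--restriction for $\Gamma/Z\to SL_n(W_{m+1})/Z\to SL_n(W_m)$ and then identify $H^1(SL_n(W_{m+1})/Z,\V)$ with $H^1(SL_n(W_{m+1}),\V)$, whereas the paper runs it for $\Gamma\to SL_n(W_{m+1})\to SL_n(W_m)$ and then identifies $H^1(\Gamma/Z,\V)^{SL_n(W_m)}$ with $H^1(\Gamma,\V)^{SL_n(W_m)}$; these are interchangeable.
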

\end{remark}

We end this subsection with a description of the relations between the cohomology groups with coefficients $\M_0(\pmb{k})$, $\s$ and $\V$:
 \begin{proposition} \label{p dividing n} Suppose that $p$ divides $n$ and that $\pmb{k}$ has at least $4$ elements.  Then, with $i$ and  $\pi$ as in  the exact sequence \ref{exact sequence p|n},
the map $H^1(SL_n(W_m),\M_0(\pmb{k}) )\xrightarrow{\pi^*} H^1(SL_n(W_m),\V )$ is an isomorphism and
 \[ 0\to H^2(SL_n(W_m),\s )\xrightarrow{i^*} H^2(SL_n(W_m),\M_0(\pmb{k}) )\xrightarrow{\pi^*}
 H^2(SL_n(W_m),\V)\]
 is exact.
 \end{proposition}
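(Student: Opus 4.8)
The plan is to feed the short exact sequence \ref{exact sequence p|n} into the long exact sequence of $SL_n(W_m)$-cohomology. Note first that $SL_n(W_m)$ acts on the scalar matrices by conjugation, hence trivially, so $\s$ is the trivial module, and $H^i(SL_n(W_m),\s)=H^i(SL_n(W_m),\pmb{k})$. Everything will then follow by combining the long exact sequence with three facts already available: the vanishing $H^1(SL_n(W_m),\pmb{k})=(0)$ and $H^2(SL_n(\pmb{k}),\pmb{k})=(0)$ from Theorem \ref{trivial H1 H2}, together with the inflation isomorphisms of Propositions \ref{coh sln} and \ref{coh sln p|n}. To apply Theorem \ref{trivial H1 H2} one should first check that its side-conditions are vacuous under our hypotheses: since $p$ is the characteristic of $\pmb{k}$ and $p\mid n$, the case $n=4$ forces $p=2$ (so $\pmb{k}\neq\F_3$), the case $n=3$ forces $p=3$ (so $\pmb{k}\neq\F_4$), and the case $n=2$ forces $p=2$ (so $\pmb{k}$ is neither $\F_3$ nor $\F_9$); together with $|\pmb{k}|\geq 4$ this is exactly what Theorem \ref{trivial H1 H2} requires. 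The same remark makes the restriction $\pmb{k}\neq\F_4$ for $n=3$ in Proposition \ref{coh sln} automatic here.

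For the $H^1$ assertion I would first reduce to the case $m=1$. The change-of-coefficients map $\pi^*$ commutes with inflation, while by Propositions \ref{coh sln} and \ref{coh sln p|n} the inflation maps $H^1(SL_n(\pmb{k}),\M_0(\pmb{k}))\to H^1(SL_n(W_m),\M_0(\pmb{k}))$ and $H^1(SL_n(\pmb{k}),\V)\to H^1(SL_n(W_m),\V)$ are isomorphisms; so $\pi^*$ at level $m$ is an isomorphism precisely when it is at level $1$. At level $1$, the long exact sequence of \ref{exact sequence p|n} over $SL_n(\pmb{k})$ has $H^1(SL_n(\pmb{k}),\s)=H^1(SL_n(\pmb{k}),\pmb{k})$ immediately to the left of $H^1(SL_n(\pmb{k}),\M_0(\pmb{k}))\xrightarrow{\pi^*}H^1(SL_n(\pmb{k}),\V)$ and $H^2(SL_n(\pmb{k}),\s)=H^2(SL_n(\pmb{k}),\pmb{k})$ immediately to its right; both vanish by Theorem \ref{trivial H1 H2}, so exactness pins $\pi^*$ as an isomorphism.

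For the $H^2$ assertion, the displayed three-term sequence is just a stretch of the long exact sequence of \ref{exact sequence p|n} over $SL_n(W_m)$, so exactness at $H^2(SL_n(W_m),\M_0(\pmb{k}))$ and at $H^2(SL_n(W_m),\V)$ is automatic, and only the injectivity of $i^*$ needs an argument. By exactness, $\ker i^*$ is the image of the connecting map $\delta\colon H^1(SL_n(W_m),\V)\to H^2(SL_n(W_m),\s)$, whose kernel is the image of $\pi^*\colon H^1(SL_n(W_m),\M_0(\pmb{k}))\to H^1(SL_n(W_m),\V)$. Since the previous paragraph shows this $\pi^*$ is onto, $\delta=0$, so $i^*$ is injective and the sequence is exact.

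There is no substantive obstacle here: all the genuine cohomological work was carried out in the preceding subsections, and what remains is to assemble it through the long exact sequence. The only points demanding care are the elementary bookkeeping that the excluded small-field cases of Theorem \ref{trivial H1 H2} (and of Proposition \ref{coh sln}) cannot occur once $p\mid n$, and the routine functoriality of inflation in the coefficients, which is what licenses the reduction of the $H^1$ statement to $m=1$.
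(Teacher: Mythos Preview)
Your proof is correct and follows essentially the same route as the paper: both argue via the long exact sequence of \ref{exact sequence p|n}, reduce everything to showing $\pi^*$ is an isomorphism on $H^1$, handle $m=1$ by the vanishing of $H^1(SL_n(\pmb{k}),\pmb{k})$ and $H^2(SL_n(\pmb{k}),\pmb{k})$ from Theorem \ref{trivial H1 H2}, and pass to general $m$ via the inflation isomorphisms of Propositions \ref{coh sln} and \ref{coh sln p|n}. Your explicit check that the excluded small-field cases of Theorem \ref{trivial H1 H2} and Proposition \ref{coh sln} cannot occur once $p\mid n$ is a useful clarification that the paper leaves implicit.
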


 \begin{proof}
The long  exact sequence  obtained from \ref{exact sequence p|n}
shows  that we just need to check  $H^1(SL_n(W_m),\M_0(\pmb{k}) )\xrightarrow{\pi^*} H^1(SL_n(W_m),\V )$ is an isomorphism.
This holds when $m=1$ because both  $H^1(SL_n(\pmb{k}),\s)$ and $H^2(SL_n(\pmb{k}),\s)$ are $0$ by
Theorem \ref{trivial H1 H2}. For general $m$ we can use induction because in the commutative diagram
\begin{equation*}\begin{CD}
H^1(SL_n(W_{m}),\M_0(\pmb{k})) @>{\pi^*}>> H^1(SL_n(W_{m}),\V)\\
@VVV                          @VVV\\
H^1(SL_n(W_{m+1}),\M_0(\pmb{k})) @>{\pi^*}>> H^1(SL_n(W_{m+1}),\V)
\end{CD}\end{equation*}
the vertical inflation maps
 are isomorphisms by Proposition \ref{coh sln} and Proposition \ref{coh sln p|n}.
\end{proof}

\subsection{Proof of Theorem \ref{injective H2}} \label{section H^2}

Recall that we want to show the injectivity of $H^2(SL_n(W_m),N)\to
 H^2(SL_n(W_m),M)$  whenever $N\subseteq M$ are $\F_p[SL_n(W_m)]$-submodules of $\M_0(\pmb{k})^r$ for some integer $r\geq 1$.

 We will write $H^*(X)$ to mean $H^*(SL_n(W_m),X)$.  Note that it is enough to show that
 $H^2(M)\to
 H^2(\M_0(\pmb{k})^r)$ is injective for all $\F_p[SL_n(W_m)]$-submodules
 $M$ of $\M_0(\pmb{k})^r$. If $(n,p)=1$ then $\M_0(\pmb{k})^r$ is semi-simple and the desired
 injectivity is immediate. So we will  suppose $p$ divides $n$ from here on.

Consider the commutative diagram
 \begin{equation}\label{diagram 1}\begin{CD}
0  @>>> M\cap \s^r @>{i}>> M @>{\pi}>> M/(M\cap \s^r) @>>>0\\
@.      @VV{i}V   @VV{i}V       @VV{j}V     @. \\
0  @>>> \s^r@>{i}>> \M_0(\pmb{k})^r @>{\pi}>> \V^r @>>>0
\end{CD}\end{equation}
where the $i$'s are inclusions. Thus  $j$ is necessarily an injection.   Taking cohomology and using Proposition \ref{p dividing n}, we get a commutative diagram
\begin{equation}
\begin{CD}
@. H^2(M\cap \s^r)@>>> H^2(M) @>>> H^2(M/( M\cap \s^r))\\
@. @VV{\iota^*}V                             @VV{\iota^*}V                @VV{j^*}V\\
0@>>>H^2(\s^r)@>>> H^2(\M_0(\pmb{k})^r) @>>>
 H^2(\V^r)
\end{CD}
\end{equation}
in which the horizontal rows are exact. Now the maps $H^2(M\cap \s^r)\xrightarrow{i^*} H^2(\s^r)$ and
$H^2(M/( M\cap \s^r))\xrightarrow{j^*} H^2(\s^r)$ are injective since  $\s^r$
and $\V^r$ are semi-simple and $i$, $j$ are injections. A straightforward diagram chase
then shows that $i^*:H^2(M)\to H^2(\M_0(\pmb{k})^r)$ is an injection, and this
completes the proof of Theorem \ref{injective H2}.$\hfill\Box$

\medskip
As a consequence, we have the following:
 \begin{corollary} \label{main corollary} Let $\pmb{k}$ be a finite field of characteristic $p$ and cardinality at least $4$,  and let $M$, $N$ be two $\F_p[SL_n(W_m)]$-submodules of $\M_0(\pmb{k})^r$ for some integer $r\geq 1$. Suppose we are given
 $x\in H^2(SL_n(W_m), M)$ and $y\in H^2(SL_n(W_m),N)$ such that $x$ and $y$ represent the same cohomology class  in $H^2(SL_n(W_m),\M_0(\pmb{k})^r)$.
 Then there exists a $z\in H^2(SL_n(W_m), M\cap N)$ such that $x=z$, resp. $y=z$, holds in $H^2(SL_n(W_m), M)$, resp.
 $H^2(SL_n(W_m), M)$.
\end{corollary}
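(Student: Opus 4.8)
The plan is to feed the data into a Mayer--Vietoris sequence and apply Theorem \ref{injective H2} to the submodule $M+N$ of $\M_0(\pmb{k})^r$. First I would form the short exact sequence of $\F_p[SL_n(W_m)]$-modules
\[
0 \longrightarrow M\cap N \xrightarrow{\ a\mapsto (a,a)\ } M\oplus N \xrightarrow{\ (b,c)\mapsto b-c\ } M+N \longrightarrow 0,
\]
observing that $M+N$ is again an $\F_p[SL_n(W_m)]$-submodule of $\M_0(\pmb{k})^r$, so Theorem \ref{injective H2} applies to it. Writing $H^\ast(-)$ for $H^\ast(SL_n(W_m),-)$, the associated long exact cohomology sequence gives, around degree $2$,
\[
H^2(M\cap N) \xrightarrow{\ \alpha^\ast\ } H^2(M)\oplus H^2(N) \xrightarrow{\ \beta^\ast\ } H^2(M+N),
\]
where $\alpha^\ast(z) = (z|_M,\, z|_N)$ is restriction along the two inclusions $M\cap N\hookrightarrow M$ and $M\cap N\hookrightarrow N$, and $\beta^\ast(u,v) = u|_{M+N} - v|_{M+N}$.

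Next I would verify that the pair $(x,y)$ lies in $\ker\beta^\ast$. The inclusion $M+N\hookrightarrow \M_0(\pmb{k})^r$ induces a map $H^2(M+N)\to H^2(\M_0(\pmb{k})^r)$ which is injective by Theorem \ref{injective H2}. Since $x|_{M+N}$ and $y|_{M+N}$ both restrict further to the common image of $x$ and $y$ in $H^2(\M_0(\pmb{k})^r)$, injectivity forces $x|_{M+N} = y|_{M+N}$, i.e.\ $\beta^\ast(x,y) = 0$. Exactness at $H^2(M)\oplus H^2(N)$ then produces a class $z\in H^2(M\cap N)$ with $\alpha^\ast(z) = (x,y)$, that is, $z$ maps to $x$ in $H^2(M)$ and to $y$ in $H^2(N)$, which is the assertion.

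There is no genuine obstacle here beyond bookkeeping; the only point requiring care is the remark, already made above, that $M+N$ is a submodule of $\M_0(\pmb{k})^r$, so that the injectivity statement of Theorem \ref{injective H2} is available for it as well as for $M$ and $N$. (When $(n,p)=1$ one could instead argue directly from semisimplicity of $\M_0(\pmb{k})^r$, but the argument above works uniformly.)
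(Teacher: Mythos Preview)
Your argument is correct and is essentially the same as the paper's own proof: both form the Mayer--Vietoris short exact sequence $0\to M\cap N\to M\oplus N\to M+N\to 0$, pass to cohomology, and use Theorem~\ref{injective H2} for the inclusion $M+N\hookrightarrow \M_0(\pmb{k})^r$ to force $(x,y)$ into the kernel of the map to $H^2(M+N)$, hence into the image of $H^2(M\cap N)$. The only cosmetic difference is that the paper also notes the injectivity of $H^2(M\cap N)\to H^2(M)\oplus H^2(N)$ (again via Theorem~\ref{injective H2}), which is not actually needed for the conclusion.
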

\begin{proof} Consider the exact sequence
\[
0\to M\cap N\xrightarrow{m\to m\oplus m}M\oplus N\xrightarrow{m\oplus n\to m-n} M+N\to 0.\]
By Theorem \ref{injective H2}, we get a short exact sequence
\[
0\to H^2(M\cap N)\to H^2(M)\oplus H^2(N)\to H^2(M+N).\]
Since $H^2(M+N)\to H^2(\M_0(\pmb{k})^r)$ is injective,  it follows that $x\oplus y$ is zero in $H^2(M+N)$ and therefore must be in the image of $H^2(M\cap N)$.
\end{proof}

\section{Proof of the main theorem}

From here on, we assume
that we are given finite fields $\pmb{k}\subseteq \pmb{k'}$
of characteristic $p$. Let $\mathcal{C}$ be the  category of complete local Noetherian
rings $(A,\m_A)$ with residue field $A/\m_A=\pmb{k'}$ and with morphisms required to be
identity on $\pmb{k'}$. We will abbreviate $W(\pmb{k})$ and $W(\pmb{k})_A$ for $A$ an
object in $\mathcal{C}$ to $W$ and $W_A$ respectively. Recall that  $W_A$ is the closed subring
of $A$ generated by the Teichm\"{u}ller lifts of elements of $\pmb{k}$; it is not an
object in $\mathcal{C}$ unless $\pmb{k}=\pmb{k'}$.  Throughout this section we assume that the finite field $\pmb{k}$ satisfies the hypothesis of the main theorem:
 \begin{assumption}\label{assumptions on k}
The cardinality of $\pmb{k}$ is at least $4$. Furthermore,
 $\pmb{k}\neq\F_5$ if $n=2$ and that $\pmb{k}\neq \F_4$ if $n=3$.
 \end{assumption}

Suppose we are given   a local ring $(A,\m_A)$  in $\mathcal{C}$ and
a closed subgroup $G$ of $GL_n(A)$ such
that $G\mod{\m_A}\supseteq SL_n(\pmb{k})$. We want to show that $G$ contains a
conjugate of $SL_n(W_A)$. Now, without any loss of generality, we may assume that
$G\mod{\m_A}= SL_n(\pmb{k})$. The quotient $G/(G\cap SL_n(A))$ is then pro-$p$. This
implies that $G\cap SL_n(A)\mod{\m_A}$ is a normal subgroup of $SL_n(\pmb{k})$ with
index a power of $p$. Now $PSL_n(\pmb{k})$ is simple since the cardinality of $\pmb{k}$
is at least $4$. Consequently we must have $G\cap SL_n(A)\mod{\m_A}=SL_n(\pmb{k})$.
Along with the fact that $A$ is the inductive limit of Artinian quotients $A/\m_A^n$,
we see that the main theorem follows from the following proposition:

\begin{proposition}\label{artinian case}
Let $\pi:(A,\m_A)\to (B,\m_B)$ be a surjection of Artinian local rings  in
 $\mathcal{C}$ with $\m_A\ker{\pi}=0$, and let $H$ be a subgroup of $SL_n(A)$ such that
 $\pi H=SL_n(W_B)$. Assume that $\pmb{k}$ satisfies assumption \ref{assumptions on k}. Then we can find a $u\in GL_n(A)$ such that $\pi u=I$ and
 $uHu^{-1}\supseteq SL_n(W_A)$.
 \end{proposition}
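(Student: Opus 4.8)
The plan is to reduce to the twisted semi-direct product picture set up in section 2 and then feed in the cohomological inputs from section 3. Set $J:=\ker\pi$, so $J^2\subseteq\m_A J=0$ and $J$ is killed by $\m_A$, hence is naturally an $\F_p[SL_n(B)]$-module; in fact it is a finite $\pmb{k'}$-vector space and therefore a (finite) direct sum of copies of the simple modules occurring in $\M_0(\pmb{k})$ after restriction to $SL_n(\pmb{k})$, but more to the point $\M_0(J)$ is (a quotient of, or a sum of copies of) $\M_0(\pmb{k})^r$-type modules over $SL_n(W_m)$ for a suitable $m$. First I would fix a set-theoretic section $s:B\to A$ of $\pi$ sending Teichm\"uller lifts to Teichm\"uller lifts (as in the proof of Proposition \ref{no section}), use it to build a section of $SL_n(B)$-sets $SL_n(B)\to SL_n(A)$, and read off from its failure to be multiplicative a normalised $2$-cocycle $x:SL_n(B)\times SL_n(B)\to \M_0(J)$, giving an identification $SL_n(A)\cong \M_0(J)\rtimes_x SL_n(B)$. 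Because $s$ respects Teichm\"uller lifts, the subring $W_A$ maps into this picture so that $SL_n(W_A)$ is carried to $\M_0(W_A\cap J)\rtimes_x SL_n(W_B)$; writing $\pmb{k}_0:=W_A\cap J$, which is a $\pmb{k}$-subspace of $J$ closed under the $SL_n(\pmb{k})$-action, $\M_0(\pmb{k}_0)$ is an $\F_p[SL_n(W_B)]$-submodule of $\M_0(J)$ isomorphic to a sum of copies of $\M_0(\pmb{k})$.

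Next I would restrict attention to $H':=H\cap SL_n(A)$; since $\pi H=SL_n(W_B)$ and the kernel $H\cap(I+\M(J))$ of $\pi|_{H}$ is a $p$-group while $SL_n(W_B)$ is perfect, one checks $\pi H'=SL_n(W_B)$ as well, and $M:=\{m\in\M_0(J): (m,e)\in H'\}$ is an $\F_p[SL_n(W_B)]$-submodule of $\M_0(J)$ with $H'$ sitting inside $\M_0(J)\rtimes_x SL_n(B)$ as an extension of $SL_n(W_B)$ by $M$. Here is where Theorem \ref{injective H2} enters: it guarantees that assumption \ref{injective} holds for $M\subseteq \M_0(J)$ (after identifying $\M_0(J)$ with a summand of some $\M_0(\pmb{k})^r$ as $SL_n(W_B)$-modules), so the extension class of $H'$ restricted to $M$ coincides with the restriction of $x$, and Proposition \ref{coh prop}(i),(ii) apply verbatim to produce a $1$-cocycle $\xi:SL_n(W_B)\to\M_0(J)$ and an isomorphism $\theta:M\rtimes_x SL_n(W_B)\xrightarrow{\sim}H'$ with $\theta(m,g)=(m+\xi(g),g)$. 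By Proposition \ref{coh sln} (together with Proposition \ref{coh sln p|n} and Proposition \ref{p dividing n} when $p\mid n$, and Theorem \ref{trivial H1 H2} for the scalar part), plus the base case Theorem \ref{coh cps} and the exclusions in assumption \ref{assumptions on k}, one has $H^1(SL_n(W_m),\M_0(\pmb{k}))=0$ and hence $H^1(SL_n(W_B),\M_0(J))=0$; thus $\xi$ is a coboundary and Proposition \ref{coh prop}(iii) gives $u_0\in I+\M(J)\subseteq GL_n(A)$ with $\pi u_0=I$ conjugating $H'$ to $M\rtimes_x SL_n(W_B)$.

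It remains to show $\M_0(\pmb{k}_0)\subseteq M$, for then $u_0^{-1}(M\rtimes_x SL_n(W_B))u_0\supseteq u_0^{-1}(\M_0(\pmb{k}_0)\rtimes_x SL_n(W_B))u_0 = SL_n(W_A)$ after untwisting the earlier identification, and $u:=u_0^{-1}$ (or the appropriate conjugate) does the job. This is the step I expect to be the real obstacle, and it is exactly \ref{claim 1} alluded to in section 2. The argument: both $\M_0(\pmb{k}_0)\rtimes_x SL_n(W_B)=SL_n(W_A)$ and $M\rtimes_x SL_n(W_B)$ are subgroups of $SL_n(A)$ lifting $SL_n(W_B)$, so $\M_0(\pmb{k}_0)$ and $M$ both give extension classes $x|_{\M_0(\pmb{k}_0)}\in H^2(SL_n(W_B),\M_0(\pmb{k}_0))$ and $x|_{M}\in H^2(SL_n(W_B),M)$ mapping to the same class $x$ in $H^2(SL_n(W_B),\M_0(J))$. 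Apply Corollary \ref{main corollary}: there is $z\in H^2(SL_n(W_B),\M_0(\pmb{k}_0)\cap M)$ restricting to both, hence the inclusion $\M_0(\pmb{k}_0)\cap M\hookrightarrow \M_0(\pmb{k}_0)$ induces a surjection on the relevant $H^2$-classes; since $\M_0(\pmb{k}_0)$ is a sum of copies of $\M_0(\pmb{k})$ and $x|_{\M_0(\pmb{k}_0)}$ is, componentwise, the non-split class of the extension $I\to\Gamma\to SL_n(W_{m+1})\to SL_n(W_m)\to I$ from Proposition \ref{no section} (this is the content of identifying $SL_n(W_A)$ as a twisted product, via Proposition \ref{image of transgression}), injectivity of $H^2(SL_n(W_B),\M_0(\pmb{k}_0)\cap M)\to H^2(SL_n(W_B),\M_0(\pmb{k}_0))$ from Theorem \ref{injective H2} forces $\M_0(\pmb{k}_0)\cap M=\M_0(\pmb{k}_0)$, i.e.\ $\M_0(\pmb{k}_0)\subseteq M$, by Lemma \ref{lemma1}(i) which classifies the submodules of $\M_0(\pmb{k})$. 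Finally, conjugating back by $u_0$ and composing with the identification $SL_n(A)\cong\M_0(J)\rtimes_x SL_n(B)$ yields the required $u\in GL_n(A)$ with $\pi u=I$ and $uHu^{-1}\supseteq SL_n(W_A)$.
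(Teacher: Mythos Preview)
Your overall architecture matches the paper's: identify $SL_n(A)$ (or rather $\pi^{-1}SL_n(W_B)$) with a twisted semi-direct product, apply Proposition \ref{coh prop} to conjugate $H$ into the form $M\rtimes_x SL_n(W_B)$, and then prove the analogue of Claim \ref{claim 1} via Corollary \ref{main corollary}. However, there is a genuine error in the middle step.

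You assert that $H^1(SL_n(W_m),\M_0(\pmb{k}))=0$, citing Propositions \ref{coh sln}, \ref{coh sln p|n}, \ref{p dividing n} and Theorem \ref{coh cps}. This is false when $p\mid n$. In that case Theorem \ref{coh cps} gives $\dim_{\pmb{k}}H^1(SL_n(\pmb{k}),\V)=1$; Proposition \ref{coh sln p|n} says inflation is an isomorphism, so $\dim_{\pmb{k}}H^1(SL_n(W_m),\V)=1$; and Proposition \ref{p dividing n} then gives $\dim_{\pmb{k}}H^1(SL_n(W_m),\M_0(\pmb{k}))=1$, not $0$. Consequently Proposition \ref{coh prop}(iii) is not available with the module $\M_0(J)$, and your cocycle $\xi$ need not be a coboundary there. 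The paper's fix is to enlarge the ambient module: from $0\to\M_0(\pmb{k})\to\M(\pmb{k})\to\pmb{k}\to 0$ and $H^1(\pmb{k})=H^2(\pmb{k})=0$ (Theorem \ref{trivial H1 H2}) one obtains $H^1(SL_n(W_m),\M(\pmb{k}))=0$ and injectivity of $H^2(\M_0(\pmb{k}))\to H^2(\M(\pmb{k}))$. One then applies Proposition \ref{coh prop} inside $\pi^{-1}SL_n(W_B)$ with the larger module $\M(\ker\pi)$, so the conjugating element $u$ lies in $I+\M(\ker\pi)\subseteq GL_n(A)$ (not necessarily in $SL_n(A)$), which still satisfies $\pi u=I$.

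There is a second, smaller gap in your treatment of the inclusion $\M_0(\pmb{k}_0)\subseteq M$. After invoking Corollary \ref{main corollary} and Lemma \ref{lemma1}(i), the only remaining possibility to exclude (besides intersection $=0$, killed by Proposition \ref{no section}) is that $p\mid n$ and $\M_0(\pmb{k}_0)\cap M$ is a nonzero subspace of $\s$. Your appeal to ``injectivity forces $\M_0(\pmb{k}_0)\cap M=\M_0(\pmb{k}_0)$'' does not handle this: injectivity of $H^2(\s)\to H^2(\M_0(\pmb{k}))$ is perfectly compatible with $x$ lying in the image. What is needed is that the image of $x$ in $H^2(\V)$ is nonzero, i.e.\ that the extension $I\to\Gamma/Z\to SL_n(W_{m+1})/Z\to SL_n(W_m)\to I$ does not split; this is Corollary \ref{non-split p|n}, which you do not invoke. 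Finally, two cosmetic points: $H$ is already assumed to lie in $SL_n(A)$, so your $H'$ equals $H$; and $W_A\cap J$ is either $0$ or a single copy of $\pmb{k}$, not a general sum of copies.
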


For the proof of the above proposition, let's set
$G:=\pi^{-1}SL_n(W_B)\cap SL_n(A)$ where  $\pi^{-1}SL_n(W_B)$ is the pre-image of
 $SL_n(W_B)$ under the map $\pi: GL_n(A)\to GL_n(B)$. We then have an exact sequence
\begin{equation}\label{seq 1}
0\xrightarrow{} \M(\ker{\pi}) \xrightarrow{j}\pi^{-1}SL_n(W_B)\xrightarrow{\pi}
SL_n(W_B)\rightarrow I
\end{equation}
 with $j(v)=I+v$ for $v\in \M(\ker\pi)$, and this restricts to
 \begin{equation}\label{seq 2}
0\xrightarrow{} \M_0(\ker{\pi}) \xrightarrow{j}G\xrightarrow{\pi}
SL_n(W_B)\rightarrow I.
\end{equation}
Note that $\M(\ker{\pi})\cong \M(\pmb{k})\otimes_{\pmb{k}}\ker \pi$ and $\M_0(\ker{\pi})\cong \M_0(\pmb{k})\otimes_{\pmb{k}}\ker \pi$ as $\pmb{k}\left[SL_n(W_B)\right]$-modules.

In what follows we will
abbreviate $H^\ast(SL_n(W_B), M)$ to simply $H^\ast(M)$.
For $X\subseteq SL_n(A)$, we set $\M_0(X)$ to be the set of matrices $v\in \M_0(\ker{\pi})$ such that
$j(v)\in X$. We then have the following::
\begin{claim}
\label{claim 1}
$\M_0(SL_n(W_A))\subseteq \M_0(H)$.
\end{claim}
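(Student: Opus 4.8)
The plan is to realise both $SL_n(W_A)$ and $H$ as subgroups of $G:=\pi^{-1}SL_n(W_B)\cap SL_n(A)$ giving sub-extensions, over $SL_n(W_B)$, of the extension \ref{seq 2}, and then to deduce $\M_0(SL_n(W_A))\subseteq\M_0(H)$ from Corollary \ref{main corollary} together with the non-splitting of the $\Gamma$-extension in Proposition \ref{no section}.

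The first step is a reduction using the hypothesis $\m_A\ker\pi=0$. Since $W=W(\pmb{k})$ is a complete discrete valuation ring with uniformiser $p$, we may write $W_A=W/p^{m'}$ and $W_B=W/p^{m}$ with $m\le m'$; from $p\cdot\ker(W_A\to W_B)\subseteq\m_A\ker\pi=0$ we get $m'\le m+1$. If $m'=m$ then $\M_0(SL_n(W_A))=0$ and there is nothing to prove, so assume $m'=m+1$, i.e. $W_A=W_{m+1}$ and $W_B=W_m$. Then $\ker(W_A\to W_B)=p^mW_A\cong\pmb{k}$, the module $M_W:=\M_0(SL_n(W_A))$ is isomorphic to $\M_0(\pmb{k})$ as an $\F_p[SL_n(W_B)]$-module (the action factoring through $SL_n(W_B)\to SL_n(\pmb{k})$), and the extension
\[ I\to M_W\to SL_n(W_A)\to SL_n(W_B)\to I \]
is precisely the extension $I\to\Gamma\to SL_n(W_{m+1})\to SL_n(W_m)\to I$, which does not split by Proposition \ref{no section}; this is where Assumption \ref{assumptions on k} is used.

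Next set $M_H:=\M_0(H)$. Since $\pi(SL_n(W_A))=SL_n(\pi W_A)=SL_n(W_B)=\pi H$, both $SL_n(W_A)$ and $H$ lie in $G$, meeting $\M_0(\ker\pi)=\ker(\pi|_G)$ in $M_W$, respectively $M_H$; fixing a $\pmb{k}$-basis of $\ker\pi$ identifies $\M_0(\ker\pi)$ with $\M_0(\pmb{k})^r$ ($r\geq1$), inside which $M_W$ and $M_H$ are $\F_p[SL_n(W_B)]$-submodules. Because $\pi H=SL_n(W_B)$ we have $G=\M_0(\ker\pi)\cdot H$ with $H\cap\M_0(\ker\pi)=M_H$, so the extension \ref{seq 2}, of class $\xi\in H^2(SL_n(W_B),\M_0(\pmb{k})^r)$, is the pushout of the $H$-extension along $M_H\hookrightarrow\M_0(\pmb{k})^r$, and likewise the pushout of the $SL_n(W_A)$-extension along $M_W\hookrightarrow\M_0(\pmb{k})^r$. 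Hence the class $y_H\in H^2(SL_n(W_B),M_H)$ of $H$ and the class $y_W\in H^2(SL_n(W_B),M_W)$ of $SL_n(W_A)$ both map to $\xi$ under the inclusion-induced maps into $H^2(SL_n(W_B),\M_0(\pmb{k})^r)$. Applying Corollary \ref{main corollary} (with $W_B=W_m$) produces $z\in H^2(SL_n(W_B),M_W\cap M_H)$ whose image in $H^2(SL_n(W_B),M_W)$ is $y_W$. Now $M_W\cong\M_0(\pmb{k})$, so by Lemma \ref{lemma1}(i) the submodule $M_W\cap M_H$ is either all of $M_W$—in which case $\M_0(SL_n(W_A))=M_W\subseteq M_H=\M_0(H)$, as wanted—or contained in the subspace $\s_W\subseteq M_W$ of scalar matrices. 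In the latter case $y_W$ lies in the image of $H^2(SL_n(W_B),\s_W)\to H^2(SL_n(W_B),M_W)$; but $H^2(SL_n(W_B),\s_W)=0$, trivially if $p\nmid n$ and by Theorem \ref{trivial H1 H2} if $p\mid n$ (when $p\mid n$, Assumption \ref{assumptions on k} implies the hypotheses of that theorem), so $y_W=0$ and the extension of $SL_n(W_B)$ by $M_W$ splits, contradicting Proposition \ref{no section}. Therefore $M_W\cap M_H=M_W$, which is the claim.

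I expect the only genuine obstacle to be the bookkeeping in the third step: checking carefully that $G$ really is the pushout of each of the two sub-extensions along the relevant kernel inclusion—so that $y_H$ and $y_W$ legitimately map to the single class $\xi$—and that the $\F_p[SL_n(W_B)]$-module structures on $M_W$ and $M_H$ coincide with those analysed in Section 3. No new cohomological input is required: the injectivity packaged in Corollary \ref{main corollary}, the vanishing of $H^2(SL_n(W_m),\pmb{k})$, and the non-splitting of the $\Gamma$-extensions have all been established already.
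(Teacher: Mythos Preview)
Your argument tracks the paper's almost exactly, but there is a gap in the final step when $p\mid n$. Having shown that $y_W$ lies in the image of $H^2(SL_n(W_B),\s_W)\to H^2(SL_n(W_B),M_W)$, you conclude $y_W=0$ by asserting $H^2(SL_n(W_B),\s_W)=0$ and citing Theorem~\ref{trivial H1 H2}. That theorem, however, only establishes $H^2(SL_n(\pmb{k}),\F_p)=0$; its statement for general $W_m$ concerns $H^1$ only. Since here $W_B=W_m$ with $m\geq 1$ arbitrary, the vanishing of $H^2(SL_n(W_m),\pmb{k})$ for $m>1$ is neither asserted nor proved anywhere in the paper, and your appeal to Theorem~\ref{trivial H1 H2} is unjustified in that range.

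The paper finishes this case without needing $H^2(SL_n(W_m),\pmb{k})=0$. From the fact that $y_W$ (the paper's $x$) comes from $H^2(SL_n(W_m),\s)$ one gets that its image in $H^2(SL_n(W_m),\V)$ vanishes, since the composite $\s\hookrightarrow\M_0(\pmb{k})\twoheadrightarrow\V$ is zero. But that image is precisely the class of the extension
\[
I\to\Gamma/Z\to SL_n(W_{m+1})/Z\to SL_n(W_m)\to I,
\]
which is non-split for every $m\geq 1$ by Corollary~\ref{non-split p|n}; this yields the contradiction. Substituting this for your last sentence repairs the argument. (Corollary~\ref{non-split p|n} was set up in Section~3 exactly for this purpose; your proposal never invokes it.)
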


Let's assume the above claim and carry on with the proof of Proposition \ref{artinian case}.
Fix a section $s:SL_n(W_B)\to SL_n(W_A)$ that sends identity to identity and set
$x:SL_n(W_B)\times SL_n(W_B)\to \M_0(SL_n(W_A))$ to be the resulting $2$-cocycle
representing the extension
\begin{equation}\label{exact seq for SL_n}
0\xrightarrow{}\M_0(SL_n(W_A))\xrightarrow{j} SL_n(W_A)\xrightarrow{} SL_n(W_B)
\xrightarrow{}I.\end{equation}
The section $s$ and cocyle $x$ thus set up an identification
\[
\varphi: \pi^{-1}SL_n(W_B)\to \M_0\rtimes_x SL_n(W_B),\]
and we have the following commutative diagram (cf diagram \ref{defn theta})
\begin{equation*}\begin{CD}
0  @>>> \M_0(H) @>>> \M_0(H)\rtimes_x SL_n(W_B) @>>> SL_n(W_B) @>>>I\\
@.      @|   @VV{\theta}V       @|     @. \\
0  @>>> \M_0(H) @>>> \varphi H @>>> SL_n(W_B) @>>>I.
\end{CD}\end{equation*}

Suppose first that $(p,n)=1$. Our assumptions on $\pmb{k}$ imply that we can combine Theorem \ref{coh cps} and Proposition \ref{coh sln} to conclude that $H^1(\M_0(\pmb{k}))=(0)$. Consequently, we get $H^1(\M_0(\ker\pi))=(0)$. Furthermore, $H^2(\M_0(H))\to H^2(\M_0(\ker\pi))$ is an injection by Theorem \ref{injective H2}. Hence we can
apply Proposition \ref{coh prop} and conclude that $\M_0(H)\rtimes_x SL_n(W_B)=\varphi u H u^{-1}$ for some $u\in G$ (cf. sequence \ref{seq 2}) with $\pi(u)=I$.

Suppose now $p$ divides $n$.  Since $H^1(\pmb{k})=0$ by Theorem \ref{trivial H1 H2}, we get the following
the exact sequence
\[0\to \pmb{k}\to H^1(\M_0(\pmb{k}))\to H^1(\M(\pmb{k}))\to 0\to H^2(\M_0(\pmb{k}))\to H^2(\M(\pmb{k}))\]
from $0\to \M_0(\pmb{k})\to \M(\pmb{k})\to \pmb{k}\to 0$. Now since $\dim_{\pmb{k}}H^1(\V)=1$ by Theorem \ref{coh cps} and Proposition \ref{coh sln p|n}, we must also have $\dim_{\pmb{k}}H^1(\M_0(\pmb{k}))=1$  by  Proposition \ref{p dividing n}. Hence $H^1(\M(\pmb{k}))=0$ and, consequently, $H^1(\M(\ker\pi))=0$. Along with Theorem \ref{injective H2}, the above exact sequence
also shows that $H^2(\M_0(H))\to H^2(\M(\ker\pi))$ is an injection. Hence $\M_0(H)\rtimes_x SL_n(W_B)=\varphi u H u^{-1}$ for some $u\in \pi^{-1}SL_n(W_B)$ (cf. sequence \ref{seq 1}) with $\pi(u)=I$ by Proposition \ref{coh prop}.

In any case, we have found a $u\in GL_n(A)$ with $\pi(u)=I$ and $\varphi u H u^{-1}=\M_0(H)\rtimes_x SL_n(W_B)$.
Finally,
 \[
 \varphi SL_n(W_A)= \M_0(SL_n(W_A))\rtimes_x  SL_n(W_B)\subseteq \M_0(H)\rtimes_x SL_n(W_B)\]
as $\M_0(SL_n(W_A))\subseteq \M_0(H)$ by our claim \ref{claim 1}, and the proposition follows.

We now establish the claim to complete the argument.
\begin{proof}[Proof of Claim \ref{claim 1}] There is nothing to prove if
$W_A\xrightarrow{\pi}W_B$ is an injection  (as $\M_0(SL_n(W_A))$ is then $0$). Therefore we may suppose that we have a
natural identification of $W_A\xrightarrow{\pi}W_B$ with  $W_{m+1}\to W_m$ for some integer $m\geq 1$, and consequently an
identification of $\M_0(SL_n(W_A))$ with $\M_0(\pmb{k})$. We will freely use these identifications in what follows.

As in the proof of the proposition, let $x\in H^2(\M_0(\pmb{k}))$ represent   the extension
\ref{exact seq for SL_n}
and let $y\in H^2(\M_0(H))$ represent the extension
\begin{equation*}
0\xrightarrow{}\M_0(H)\xrightarrow{j} H\xrightarrow{} SL_n(W_B)
\xrightarrow{}I.\end{equation*}
Then $x$ and $y$ represent the same cohomology class in $H^2(\M_0(\ker \pi))$. By Corollary \ref{main corollary},  there is a $z\in H^2(\M_0(\pmb{k})\cap \M_0(H))$ such that $x$ and $z$ (resp. $y$ and $z$) represent the same cohomology class in $H^2(\M_0(\pmb{k}))$ (resp. $H^2(\M_0(H))$).

Suppose the claim $\M_0(\pmb{k})\subseteq \M_0(H)$ is false. Then we must have
 $\M_0(\pmb{k})\cap\M_0(H)\subseteq\s$ by Lemma \ref{lemma1}. Now, if $\M_0(\pmb{k})\cap \M_0(H)=0$  then $x$ will be zero, contradicting  non-splitting of the extension \ref{exact seq for SL_n}.

Thus $\M_0(\pmb{k})\cap \M_0(H)$ must be a non-zero  submodule of $\s$, and we must therefore have   $p$ dividing $n$. Now  the image
 of $x$ in $H^2(\M_0(\pmb{k})/\s)$ represents the extension
 \[
 0\xrightarrow{}\M_0(\pmb{k})/\s\xrightarrow{j} SL_n(W_{m+1})/Z\xrightarrow{\mod{p^m}} SL_n(W_m)
\xrightarrow{}I.
 \]Since  this is non-split by Corollay \ref{non-split p|n}, the image of $x$ in $H^2(\V)$ is not $0$. This contradicts the fact that $x$ is itself in the image of $H^2(\s)\to H^2(\M_0(\pmb{k}))$.
\end{proof}

\begin{remark} It is well known that the mod-$p$ reduction map $SL_2(\Z/p^2\Z)\to SL_2(\Z/p\Z)$ has a homomorphic section when $p$ is $2$ or $3$. (See the exercises at the end of \cite[Chapter IV(3)]{serre}.) Thus the conclusion of the main theorem fails when $n=2$ and $\pmb{k}$ is $\F_2$ or $\F_3$.

The main theorem also fails when $n=2$ and $\pmb{k}=\F_5$. To see this, choose $0\neq \xi\in H^1( SL_2(\F_5), \M_0(\F_5))$ and consider the subgroup
\[ G:=\{(I+\epsilon \xi(A))A \quad | \quad A\in SL_2(\F_5)\}\]
of $SL_2(\F_5[\epsilon])$ where $\F_5[\epsilon]$ is the ring of dual numbers (so $\epsilon^2=0$). Clearly,  $G\mod\epsilon =SL_2(\F_5)$. If $G$ can be conjugated to $SL_2(\F_5)$ in $GL_2(\F_5[\epsilon])$ then the cocycle $\xi$ must vanish in $H^1(SL_2(\F_5),\M(\F_5))$. This cannot happen as the sequence $0\to \M_0(\F_5)\to \M(\F_5)\to \F_5\to 0$ splits.
\end{remark}

\begin{remark} \label{application} Fix a finite field $\pmb{k}$ satisfying assumption \ref{assumptions on k} and an integer $m\geq 1$. The main theorem then determines the universal deformation ring for $G:=SL_n(W_m)$ with standard representation completely. (See \cite{mazur1}, \cite{mazur2} for background on deformation of representations.)

To describe this fully, let $\rho:G\to SL_n(W_m)$ be the natural representation and set $\overline\rho:=\rho\mod{p}$. We work inside the category of complete local  Noetherian rings with residue field $\pmb{k}$ from here on.
Let $R$ be the universal deformation ring for deformations of $(G,\overline\rho)$ in
this category and let
$\rho_R:G\to GL_n(R)$ be the universal representation.

By universality, there is a morphism $\pi :R\to W_m$ such that $\pi\circ\rho_R$ is strictly equivalent to $\rho$. By our main theorem $X\rho_R(G)X^{-1}\supseteq SL_n(W_R)$ for some $X$ in $GL_n(R)$; here, we can insist that $X$ reduces to the identity modulo $\m_R$. Now $\pi|_{W_R}:W_R\to W_m$ along with
\[
|SL_n(W_m)|=|G|\geq |\rho_R(G)|\geq |SL_n(W_R)|\geq |SL_n(W_m)|\]
implies that $\pi|_{W_R}:W_R\to W_m$ is an isomorphism and that $X\rho_R(G)X^{-1}=SL_n(W_R)$. Replacing $\rho_R$ with the  strictly equivalent representation $X\rho_RX^{-1}$ if necessary, we can then assume that
$\rho_R:G\to GL_n(R)$ takes values in $SL_n(W_R)$. Writing $i : W_m\to W_R$ for the inverse to $\pi|_{W_R}$, we conclude that $i\circ \rho$ is strictly equivalent to $\rho_R$.

We will now verify that $\rho: G\to SL_n(W_m)$ is the universal deformation. So given
a lifting
$\rho_A: G\to GL_n(A)$  of $\overline\rho: G\to SL_n(\pmb{k})$, we need to show that there is a unique morphism $i_A: W_m\to A$ such that $i\circ \rho$ is strictly equivalent to $\rho_A$. Uniqueness comes for free (it has to send $1$ to $1$). For  existence, note that  by universality there is a morphism $\pi_A: R\to A$ such that
$\pi_A\circ \rho_R$ is strictly equivalent to $\rho_A$. It is then an easy check to see that $i_A:=\pi_A\circ i$ works.
\end{remark}

\section*{Acknowledgements}
The author thanks the referee for bringing to attention the work of Boston in \cite{mazur wiles} and  other helpful comments.

\bibliographystyle{abbrv}

\end{document}